\newtheorem{theorem}{Theorem}[section]
\newtheorem{lemma}[theorem]{Lemma}
\newtheorem{proposition}[theorem]{Proposition}
\newtheorem{definition}[theorem]{Definition}
\newtheorem{remark}[theorem]{Remark}
\DeclareMathOperator*{\esssup}{ess\,sup}
\DeclareMathOperator*{\essinf}{ess\,inf}
\title{Optimal design of a model energy conversion device}
\author{Lincoln Collins and Kaushik Bhattacharya\\
\small Division of Engineering and Applied Science\\
\small California Institute of Technology\\
\small Pasadena, CA 91125\\
\small Email: lcollins@caltech.edu and bhatta@caltech.edu}
\begin{document}

\maketitle
\begin{abstract}
Fuel cells, batteries, thermochemical and other energy conversion devices involve the transport of a number of (electro-)chemical species through distinct materials so that they can meet and react at specified multi-material interfaces.  Therefore, morphology or arrangement of these different materials can be critical in the performance of an energy conversion device.  In this paper, we study a model problem motivated by a solar-driven thermochemical conversion device that splits water into hydrogen and oxygen.  We formulate the problem as a system of coupled multi-material reaction-diffusion equations where each species diffuses selectively through a given material and where the reaction occurs at multi-material interfaces.  We express the problem of optimal design of the material arrangement as a saddle point problem and obtain an effective functional which shows that regions with very fine phase mixtures of the material arise naturally.  To explore this further, we introduce a phase-field formulation of the optimal design problem, and numerically study selected examples.
\end{abstract}

%%%%%%%%%%%%%%%%%%%%%%%%%%%%%
\section{Introduction}
The efficiency of fuel cells, batteries and thermochemical energy conversion devices depends on inherent material characteristics that govern the complex chemistry and transport of multiple species as well as the spatial arrangement of the various materials. Therefore, optimization of the spatial arrangement is a recurrent theme in energy conversion devices.  Traditional methods of synthesis offer limited control of the microstructure and there has been much work in advanced imaging for these uncontrolled microstructures (e.g., \cite{barnett}) and optimizing gross features.  However, the growing ability for directed synthesis \cite{umeda,jung,wen,li} allows us to ask the question of what microgeometries are optimal for particular applications. In this sense we direct the problem to one of optimal design where we are not limited by the imagination in determining new microstructures but instead allow for the underlying physical behavior and optimization techniques to direct architecture and microstructure, and eventually lead synthesis to unprecedented performance. 

The tailoring of material microstructure and nanostructure is not new to energy conversion and storage \cite{li,arico}.  The development of hierarchical structures and porosity affords balancing interfacial reactions and chemical transport to maximize efficiency.  For example, in cathodes of solid oxide fuel cells, the efficiency is largely determined by reactions at triple phase boundaries and the availability of transport pathways through each phase.  These features are coupled with volume fractions, surface area densities, interfacial curvatures, and phase tortuosities to find the optimal balance between surface reaction and transport \cite{wilson,smith}.  Similarly, the importance of microstructure on anode performance \cite{atkinson,suzuki,cronin,clemmer} has also been established.  The morphology of materials used in lithium ion batteries is of interest from both the theoretical \cite{stephenson,chen2010porous,bruce} and experimental standpoint \cite{hu,jia,wen}.  Mass and ion transport and interface measure in battery electrodes directly impact the storage capacity and rate performance and is an ideal problem for optimization across many length scales.

The application of metal oxides for solar-driven thermochemical conversion devices offers a promising new sustainable energy source \cite{Chueh}.  Here, a porous, redox active oxide is cyclically exposed to inert gas at high temperature, generating oxygen vacancies in the structure, and reactant gas ($\text{H}_2\text{O},\text{ CO}_2$), at moderate temperature, releasing fuel upon reoxidation the oxide \cite{umeda}.  The lack of complex and expensive catalyst systems and full use of the entire solar spectrum separate these devices from many other photo-based energy sources.  Recent advances made in the materials research community indicate many possible candidates for these applications, and lend themselves to advanced synthesis techniques facilitating directed architecture, where significant improvements can be made \cite{venstrom}.  The thermodynamic and kinetic behavior of these materials are well-studied \cite{lai,Chuehb,gopal}, fully describing the gas phase transport of reactant and product gases coupled to the solid state mass and charge transport occurring through the bulk.   

In this paper, we study a model system motivated by metal oxides in solar-driven thermochemical conversion devices.  We have a two phase material (solid oxide and pore) where reactions at the surface create (gaseous) oxygen in the carrier gas in the pores and bound oxygen in the solid oxide; the oxygen diffuses through the carrier gas in the porous region and bound oxygen diffuses through the solid oxide.  We seek to understand the arrangement of the solid and porous regions to maximize the transport given sources and sinks for the gaseous oxygen and vacancies.  

There is  a large literature in the study of optimal design problems, especially seeking to minimize compliance for a given weight as well as maximize conduction for a given mass.  It is understood that the underlying problem is ill-posed in that the optimal designs often lie outside of the set of ``classical admissible designs" and one has to either relax the problem by homogenization \cite{Goodman, kohnstrang1} or regularize it by the introduction of perimeter constraints  \cite{Ambrosio1993,bourdin}.  It leads to two widely used methods, topology optimization  (e.g. \cite{bendsoetopology}) and shape optimization (e.g. \cite{allaire}).  The presence of two species lends a vectorial character to our problem, and the presence of the surface sources makes the problem at hand different from those in the literature.

%to study this problem.  Topology optimization offers a framework for designing structures with optimal response given a set of constraints and material behavior.   Topology optimization and the closely related technique of shape optimization has been extensively used in designing structure that minimize compliance for a given weight as well as maximize conduction for a given mass.  The mathematical structure of these problems have been well-studied.  Typical problems in material arrangement are known to be.  These have led to good numerical methods to solve specific problems (e.g. \cite{Bendsoe1999, allaire, kohnstrang1}).

We begin with the formulation of physical problem, shown in Figure \ref{fig:domain}, in Section \ref{sec:formulation}.  We start with a sharp interface formulation.  However, the optimal design of the sharp interface model is mathematically ill-posed, and therefore we study the analogous diffuse interface model.  We also note that homogenization of the sharp interface model leads to equations of the same form as the diffuse interface equations.  The transport of two chemical species with concentration $u_1$ and $u_2$ is governed by the following reaction diffusion equations for $i=1,2$:
\begin{align} \label{eq:diffuse}
\begin{cases}
\nabla\cdot k_i \nabla u_i = f_i , \quad & \text{in}\;\Omega, \\
k_i \nabla u_i\cdot \hat{n}=0& \text{on}\;\partial\Omega\setminus\partial_{i}\Omega\\
u_i=u_i^*&\text{on}\;\partial_{i}\Omega,
\end{cases}
\end{align}
where the isotropic conductivities are 
\begin{equation} \label{eq:cond}
k_1(x)= k_{11} \chi(x)+ k_{12} (1- \chi(x)) , \quad
k_2(x)=k_{21}\chi(x) + k_{22} (1- \chi(x)) ;
\end{equation}
with $k_{11}, k_{22} >> k_{12},k_{21}  >0$, and the sources are
\begin{equation} \label{eq:source}
f_1 = - f_2 = \chi (1-\chi)  k_s (u_1 - u_2)
\end{equation}
for $k_s>0$ and $\chi: \Omega \to [0,1]$.  Briefly, we have a two-material system and $\chi$ describes the volume fraction of material 1 (say solid phase).  Chemical species 1 (say bound oxygen) diffuses preferentially in material 1 ($\chi =1$) while species 2 (say oxygen gas) diffuses preferentially in material 2 (say pore, $\chi =0$).  The species react and therefore there is a source at the interface $\chi \ne 0,1$.  For future use, we write the source as $f = \chi (1-\chi)  Au$ where $f = \{f_1, f_2\}$, $u=\{u_1,u_2\}$ and 
$$
A=k_s\left(\begin{array}{cc}
1 & -1\\
-1 & 1
\end{array}\right).
$$ 
This problem allows a variational formulation, and the direct method of the calculus of variation allows us to prove existence of a solution.

We study the optimal design problem of maximizing the flux of species through the reactor over all possible arrangements $\chi$ in Section \ref{sec:opt}.  We show that this gives rise to a saddle point problem.  We then obtain an explicit characterization which shows that the mixed phase regions arise naturally.  To understand this further through particular examples, we introduce a phase field formulation in Section \ref{sec:phasefield}.  Specifically, we add an Allen-Cahn type energy to that associated with the variational formulation of the transport problem and then solve the gradient flow associated with this energy.  We solve this numerically in selected examples and conduct a parameter study.  These show that the optimal design can be quite intricate as it seeks to balance transport and reaction.

%%%%%%%%%%%%%%%%%%%%%%%%%%%%%
\section{Formulation} \label{sec:formulation}

%%%%%%%%%%%%%%%%%%%%%%%%%%%%%
\subsection{Sharp interface formulation} \label{sec:sharp}
\begin{figure}\label{fig:domain}
\centering
\includegraphics[width=0.26\textwidth]{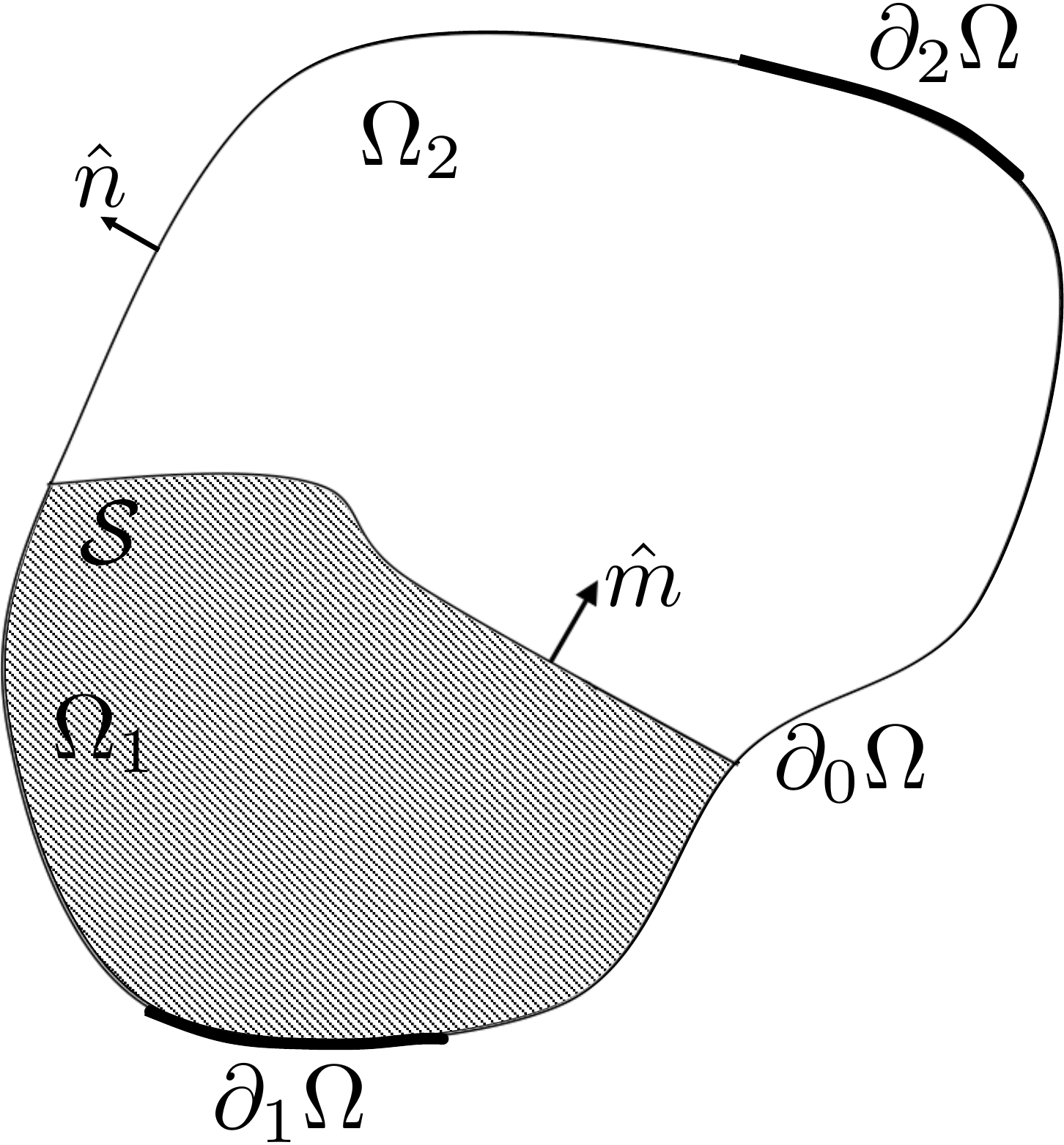}
\caption{The physical setting: chemical species 1 enters through the source $\partial_1\Omega$, diffuses through $\Omega_1$, is converted to chemical species 2 through a surface reaction at the interface $\mathcal{S}$, chemical species 2 diffuses through $\Omega_2$ and leaves through the sink $\partial_2\Omega$.}
\end{figure}

Consider an open, bounded region $\Omega \subset {\mathbb R}^n$ with Lipschitz boundary separated into two regions $\Omega_1$ and $\Omega_2$ by an interface $\mathcal{S}$ show in Figure \ref{fig:domain}.   We consider the diffusion of one species with concentration $u_1$ in region $\Omega_1$ with isotropic diffusivity $K_1>0$, and a second species with concentration $u_2$ in region $\Omega_2$ with isotropic diffusivity $K_2>0$.  The two species meet at the interface and react with reaction rate $k_s >0$.  The boundary of $\Omega$ is divided into three regions $\partial \Omega = \partial_1 \Omega \cup  \partial_2 \Omega \cup  \partial_0 \Omega$ where $\partial_i \Omega \subset \partial \Omega_i$.  The concentration of species $i$ is held at a prescribed value $u_i^*$ on $\partial_i \Omega$ while $\partial_0 \Omega$ is insulating.  
This is described by the following system of equations:
\begin{align} \label{eq:sharp}
\begin{cases}
\nabla\cdot K_i \nabla u_i=0 \quad &\text{in}\;\Omega_i\\
-K_i\nabla u_i\cdot \hat{m}=k_s(u_1-u_2) &\text{on}\;\mathcal{S}\\
u_i=u_i^* &\text{on}\;\partial_{i}\Omega\\
K_i\nabla u_i\cdot \hat{n}=0&\text{on}\;\partial\Omega\setminus\partial_i\Omega
\end{cases}
\end{align}
for $i=1,2$ where $\hat{m}$ represents the normal to $\mathcal{S}$ pointing from $\Omega_1$ pointing to $\Omega_2$, and $\hat{n}$ represents the outward normal to $\partial\Omega$.
\subsection{Diffuse interface formulation}

It is often convenient to work with a smooth or diffuse interface formulation of the problem above.  We now show formally that the diffuse interface formulation in (\ref{eq:diffuse})-(\ref{eq:source}) leads to the sharp interface formulation in (\ref{eq:sharp}) in an asymptotic limit.  Let $\chi$ be the characteristic function of $\Omega_1$ as defined in Section \ref{sec:sharp}.  Let $\chi^\eta$ be the mollification of $\chi$ with a standard mollifier at length-scale $\eta$: $\chi^\eta = \varphi^\eta * \chi$ where $\varphi^\eta (x) = \eta^{-n} \varphi(x/\eta)$.  Let $k_1^\eta, k_2^\eta$ be as in (\ref{eq:cond}) with $\chi = \chi^\eta$ and $k_{12}^\eta = (1-\exp(-\eta)) k_{12}$, $k_{21}^\eta = (1-\exp(-\eta)) k_{21}$, and  $f_i^\eta (x) = f_i(x/\eta)$.  Let $u_i^\eta$ solve
\begin{align} \label{eq:formal}
\nabla\cdot k_i^\eta \nabla u_i^\eta = \eta^2 f_i^\eta , \quad i = 1,2 \quad \text{in}\;\Omega.
\end{align}
First consider the outer expansion $\eta \to 0$, and note that (\ref{eq:formal}) formally gives (\ref{eq:sharp})$_{1,3,4}$ in $\Omega_{1,2}$.  Further, note that $u_1$ (respectively $u_2$) is indeterminate on $\Omega_2$ (respectively $\Omega_1$).  However, this outer expansion does not give any condition on the interface $\mathcal{S}$. To obtain this condition, denote the limiting values on the interface to be $\bar{u}_1, \bar{u}_2$.  We seek to relate these to the flux as in (\ref{eq:sharp})$_{2}$.  

Now consider the inner expansion.  Pick a point $x_0 \in {\mathcal S}$ and change variables $x \mapsto (x-x_0)/\eta$.  We obtain 
\begin{align} \label{eq:formalinner}
\nabla\cdot k_i \nabla u_i =  f_i , \quad i = 1,2
\end{align}
where $k_i = k_i^1$.  Further, as $\eta \to 0$, $\chi$ and hence the solution depend only on one dimension that is normal to the interface.  We take this direction to be $x_1$ by changing variables if necessary.  Let $U_i$ solve (\ref{eq:formalinner}) for the boundary conditions $(u_1, u_2) (x_1) \to (1,0)$ as $x_1 \to -\infty$ and $(u_1, u_2) (x_1) \to (0,1)$ as $x_1 \to \infty$, and $V_i$ solve (\ref{eq:formalinner}) for the boundary conditions $(u_1, u_2) (x_1) \to (1,0)$ as $x_1 \to -\infty$ and $(u_1, u_2) (x_1) \to (0,-1)$ as $x_1 \to \infty$.  Note that 
$$
u_i = \alpha U_i + \beta V_i + \gamma
$$
also solves (\ref{eq:formalinner}) for any arbitrary scalars $\alpha, \beta, \gamma$, 
and satisfies the boundary conditions 
$$
u_1 \to \alpha + \beta + \gamma \mbox { as } x_1 \to -\infty, \quad 
u_2 \to \alpha - \beta + \gamma \mbox{ as } x_1 \to \infty.
$$
Further, by integrating (\ref{eq:formalinner}), we find that the flux 
$$
J = [[-k_i \nabla u_1\cdot e_1]]_{-\infty}^{\infty} =  K_1 u_1' (-\infty)
=  [[k_i \nabla u_2\cdot e_1]]_{-\infty}^{\infty} = - K_2 u_2' (+\infty)
= \alpha J_U + \beta J_V
$$
where $J_U$, $J_V$ are the fluxes associated with the solutions $U$ and $V$ respectively.   It is easy to verify that we can find $\alpha, \beta, \gamma$ to satisfy the boundary conditions $u_1 (-\infty) = \bar{u}_1,
u_2 (\infty) = \bar{u}_2$ as well as the flux condition $J = k_s (\bar{u}_1 -\bar{u}_2)$.  We obtain (\ref{eq:sharp})$_2$.

%%%%%%%%%%%%%%%%%%%%%%%%%%%%%
\subsection{Homogenization of the sharp interface formulation}

Consider the situation where the domain $\Omega$ is made of a periodic microstructure at a scale $\varepsilon << 1$.  Specifically, let $Y$ be the unit cube consisting of two subdomains $Y_1$ and $Y_2$ separated by an interface $\Sigma$; $Y = Y_1 \cup Y_2\cup\Sigma$.  We assume that $\Omega^\varepsilon_1 = \cup_i \varepsilon (a_i + Y_1)$, $\Omega^\varepsilon_2 = \cup_i \varepsilon (a_i + Y_2)$, and ${\mathcal S}^\varepsilon = \cup_i \varepsilon (a_i + \Sigma)$.  We assume that the equations (\ref{eq:sharp}) hold in this domain with the reaction coefficient of order $\varepsilon$: i.e., $k_s = \varepsilon K_s$ for some $K_s>0$ independent of $\varepsilon$. Peter and B\"ohm \cite{peter2008different} (also see Auriault and Ene \cite{auriault}) show that this periodic system can be homogenized, and the homogenized equations are given by (\ref{eq:diffuse}) where $k_1, k_2$ are given by the usual unit cell problem of diffusion and 
$$
f_1 = - f_2 = K_s \mbox{Area} (\Sigma) (\bar{u}_1 - \bar{u}_2)
$$
where $\bar{u}_i$ is the solution to the unit cell problem.  Therefore, $k_i, f_i$ depend not only on the volume fraction but also other aspects of the microstructure.  However, we may view (\ref{eq:cond}) and (\ref{eq:source}) as simple models for these.

%%%%%%%%%%%%%%%%%%%%%%%%%%%%%
\subsection{Variational formulation}

%{\color{blue} 1.  The functional you had was incorrect.  The first term works only in 2d and the source term gave the wrong E-L equations.  I think all the proofs still work, but please double-check and change\\
%2.Incorporate an appropriate statement about the Euler-Lagrange equation into Theorem \ref{existence}}

The following theorem provides a variational formulation of the problem (\ref{eq:diffuse}) above.

\begin{theorem}
\label{existence}
Let $\Omega\subset\mathbb{R}^n$ be a bounded, connected open set with Lipschitz boundary,
$$\chi \in \mathcal{X} = \{\chi\in L^2(\Omega;[0,1])\}$$
be a given design and $\lambda \in {\mathbb R}$. The problem 
$$
 \inf\left\{L(u,\chi) = \int_\Omega \frac{1}{2} \sum_{i=1,2} k_i |\nabla u_i|^2 + \frac{1}{2} \chi(1-\chi)u \cdot A u -\lambda\chi\,dx\,:\, u\in\mathcal{V}\right\}
$$
where 
\begin{align*}
\mathcal{V}=\{v\in H^1(\Omega;\mathbb{R}^2):v_i=u_i^*\text{ on }\;\partial_{i}\Omega,\,i=1,2\}\\
\end{align*}
attains its minimum.  Further, the minimum is unique and satisfies the Euler-Lagrange equation
\begin{align} \label{eq:elw}
\int_\Omega \left( \sum_{i=1,2} k_i \nabla u_i \cdot \nabla \varphi_i + \chi(1-\chi) \varphi \cdot A u \right) dx
=0
\end{align}
for all $\varphi \in {\mathcal V}_0 = \{v\in H^1(\Omega;\mathbb{R}^2):v_i=0\text{ on }\;\partial_{i}\Omega,\,i=1,2\}$.
%
%Additionally, if the minimizing  $\bar{u}=\{\bar{u}_1,\bar{u}_2\}\in\mathcal{V}\cap C^2(\bar{\Omega})$, it also satisfies the following Euler-Lagrange equations
%\begin{align*}
%&\nabla\cdot k_i \nabla \bar{u}_i = f_i , \quad i = 1,2 \quad \text{in}\;\Omega\\
%&k_i \nabla \bar{u}_i\cdot \hat{n}=0\;\;\;\;\text{on}\;\partial\Omega\setminus\partial_{i}\Omega\\
%&\bar{u}_i=u_i^*\;\;\;\;\text{on}\;\partial_{i}\Omega.
%\end{align*}

\end{theorem}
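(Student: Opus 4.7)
The plan is to apply the direct method of the calculus of variations. Since $\chi$ is fixed, the functional $L(\cdot,\chi)$ is a quadratic plus affine functional of $u$, so existence, uniqueness, and the Euler--Lagrange characterization will all follow from coercivity, weak lower semicontinuity, and strict convexity.

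First I would establish coercivity on $\mathcal{V}$. Because $k_{ij}>0$, one has uniform bounds $0 < \underline{k} \le k_i(x) \le \overline{k}$ regardless of $\chi$, so the gradient part of $L$ controls $\tfrac{\underline{k}}{2}\sum_i \|\nabla u_i\|_{L^2}^2$. Next, since $A$ is positive semidefinite with eigenvalues $0$ and $2k_s$, and $\chi(1-\chi)\ge 0$, the reaction term is nonnegative, and $|\lambda\chi|\le|\lambda|$ contributes only a bounded constant because $|\Omega|<\infty$. Fixing any $H^1$ extension $u^*\in H^1(\Omega;\mathbb{R}^2)$ of the Dirichlet data and writing $u = u^*+w$ with $w_i=0$ on $\partial_i\Omega$, a Poincaré/Friedrichs inequality on $H^1_{\partial_i\Omega}(\Omega)$ (valid when each $\partial_i\Omega$ has positive surface measure) yields $\|w_i\|_{H^1}\le C\|\nabla w_i\|_{L^2}$. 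Combining these gives $L(u,\chi) \ge c\|u\|_{H^1}^2 - C$ for constants depending only on the data, so any minimizing sequence $\{u^n\}$ is bounded in $H^1(\Omega;\mathbb{R}^2)$.

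Next I extract a subsequence $u^n\rightharpoonup \bar u$ weakly in $H^1$ and (by Rellich--Kondrachov) strongly in $L^2$. The bilinear form $(u,v)\mapsto\int_\Omega \sum_i k_i \nabla u_i\cdot\nabla v_i\,dx$ is a continuous, symmetric, positive definite inner product on $\mathcal{V}_0$, so the gradient term is convex and continuous in $\nabla u$, hence weakly lower semicontinuous. The reaction term $\tfrac12\int_\Omega\chi(1-\chi)\,u\cdot A u\,dx$ is continuous under strong $L^2$ convergence because $\chi(1-\chi)\in L^\infty$ and $u\mapsto u\cdot A u$ is quadratic. The term $-\int_\Omega\lambda\chi\,dx$ does not involve $u$. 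Weak closedness of $\mathcal{V}$ (it is an affine translate of the closed subspace $\mathcal{V}_0$) ensures $\bar u\in\mathcal{V}$, so $\bar u$ is a minimizer.

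For uniqueness I would argue strict convexity: because $w\mapsto \tfrac12\int_\Omega \sum_i k_i|\nabla w_i|^2\,dx$ is strictly convex on $\mathcal{V}_0$ (if two elements of $\mathcal{V}$ have identical gradients a.e., their difference is locally constant and vanishes on $\partial_i\Omega$, which has positive surface measure, hence equals zero), and since the reaction term is convex in $u$, the full functional is strictly convex in $u$. Finally, the Euler--Lagrange identity is obtained in the standard way: for any $\varphi\in\mathcal{V}_0$ and $t\in\mathbb{R}$, $\bar u+t\varphi\in\mathcal{V}$, and $t\mapsto L(\bar u+t\varphi,\chi)$ is a smooth quadratic attaining its minimum at $t=0$, so differentiating and setting $\tfrac{d}{dt}\big|_{t=0}=0$ yields \eqref{eq:elw}. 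I expect the main obstacle to be the coercivity step, specifically the need to justify Poincaré's inequality simultaneously for both components, which tacitly requires that each Dirichlet piece $\partial_i\Omega$ have positive $(n-1)$-dimensional measure; everything else is routine.
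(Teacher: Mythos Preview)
Your proposal is correct and follows essentially the same route as the paper: direct method with coercivity via a Poincar\'e-type inequality (the paper isolates this as a separate lemma), weak lower semicontinuity from convexity of the integrand, and uniqueness from the strict convexity of the gradient part on $\mathcal{V}_0$. The only cosmetic difference is that the paper phrases uniqueness as a contradiction argument that invokes the Euler--Lagrange equation to derive $\int \sum_i k_i|\nabla(u_i-v_i)|^2 + \chi(1-\chi)(u-v)\cdot A(u-v)\,dx=0$, whereas you argue strict convexity directly; both reduce to the same observation that equal gradients force the componentwise-constant difference to vanish via the boundary data.
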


\begin{proof}
Set 
$$
\inf\left\{L(u,\chi)\,:\, u\in\mathcal{V}\right\}=m
$$
and observe that because our integrand is finite and satisfies the growth conditions 
$$
-\lambda \chi(x)\leq f(x,v,\xi) \leq c(1+|v|^2+|\xi|^2),
$$
we have that $-\infty<m<+\infty$.  Let $u^\nu$ be a minimizing sequence, i.e. $L(u^\nu, \chi)\rightarrow m$ as $ \nu \to \infty$. For $\nu$ sufficiently large, 
$$
m+1\geq L(u^\nu,\chi)
\geq \gamma_1\norm{\nabla u^\nu}^2_{L^2}+\gamma_2\norm{u^\nu}^2_{L^2} -\int_\Omega |\gamma_3(x)|\,dx
\geq \gamma_1\norm{\nabla u^\nu}^2_{L^2}-\gamma_4
$$
with $\gamma_k>0$ independent of $\nu$ since $\Omega$ is bounded.   It follows that 
$$
\norm{u^\nu}_{W^{1,2}}\leq\gamma_5.
$$
appealing to our version of Poincar{\'e}'s inequality (Lemma \ref{poincare} below).
We deduce that there exists a $\bar{u}\in\mathcal{V}$  and a subsequence (still denoted $u^\nu$) that converges weakly in $W^{1,2}$:  $u^\nu\rightharpoonup \bar{u}$ in $W^{1,2}$ as $\nu\rightarrow \infty$.
It follows from the convexity of the integrand (since $k_1, k_2, k_s >0$) that the functional is sequentially weakly lower semicontinuous.  Therefore,
$$
\liminf_{\nu\rightarrow\infty}L(u^\nu,\chi)\geq L(\bar{u},\chi)
$$
and hence $\bar{u}$ is a minimizer of $(P)$. 

A simple calculation shows that any minimizer satisfies the Euler-Lagrange equation (\ref{eq:elw}).  We prove the uniqueness of the minimum by contradiction.  Suppose $L(u,\chi) = L(v,\chi) = m$.  Then, 
$$
\int_\Omega \left(  \frac{1}{2} \sum_{i=1,2} k_i (|\nabla u_i|^2 - |\nabla v_i|^2) + \frac{1}{2} \chi(1-\chi)
(u \cdot A u - v \cdot A v) \right) dx = 0.
$$
Further, since $u,v \in {\mathcal V}$, $u-v \in {\mathcal V}_0$.  Therefore, from the Euler-Lagrange equation (\ref{eq:elw}) for $v$, we conclude
$$
\int_\Omega \left( \sum_{i=1,2} k_i |\nabla v_i|^2  + \chi(1-\chi) v \cdot A v 
- \sum_{i=1,2} k_i \nabla u_i \cdot \nabla v_i - \chi(1-\chi) v \cdot A u \right) dx =0
$$
Adding these two equations, 
$$
 \frac{1}{2} \int_\Omega \left( \sum_{i=1,2} k_i |\nabla u_i - \nabla v_i|^2 + 
 \chi(1-\chi) (u-v) \cdot A (u-v) \right) dx =0.
 $$
It follows that $\nabla u_i = \nabla v_i$ a.e. and $u-v = \psi(x) \{1,1\}$.  Together, we conclude that $\psi$ is constant and from the boundary condition that $\psi=0$.  Thus $u=v$, giving us a contradiction.
\end{proof}

We have used the following lemma.

\begin{lemma}(Poincar{\'e}'s inequality, adapted from \cite{evans}) 
\label{poincare}
Let $\Omega$ and ${\mathcal V}$ be as in the theorem above.  There exists a constant $c$, depending only on $n$ and $\Omega$ such that 
$$
\norm{u}_{L^2} \leq c\norm{\nabla u}_{L^2}
$$
for each function $u\in \mathcal{V}$.
\end{lemma}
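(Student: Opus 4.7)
The plan is a standard proof by contradiction combined with the Rellich--Kondrachov compact embedding $H^1(\Omega)\hookrightarrow L^2(\Omega)$, which is available because $\Omega$ is bounded with Lipschitz boundary. Suppose the stated inequality fails; then for each $k\in\mathbb{N}$ one can find $u^k\in\mathcal{V}$ with $\norm{u^k}_{L^2}>k\,\norm{\nabla u^k}_{L^2}$. I would renormalize by setting $v^k=u^k/\norm{u^k}_{L^2}$, so that $\norm{v^k}_{L^2}=1$ and $\norm{\nabla v^k}_{L^2}<1/k$; in particular $\{v^k\}$ is bounded in $H^1(\Omega;\mathbb{R}^2)$.

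Next, after relabelling, I would extract a subsequence with $v^k\to v$ strongly in $L^2$ and $v^k\rightharpoonup v$ weakly in $H^1$. Strong $L^2$-convergence transfers $\norm{v^k}_{L^2}=1$ to $\norm{v}_{L^2}=1$, while $\nabla v^k\to 0$ in $L^2$ together with $\nabla v^k\rightharpoonup\nabla v$ in $L^2$ forces $\nabla v\equiv 0$ a.e.\ on $\Omega$. Connectedness of $\Omega$ then implies that each component $v_i$ is a constant $c_i$. To reach the contradiction I would invoke the sequential weak continuity of the trace operator $H^1(\Omega)\to H^{1/2}(\partial_i\Omega)$ on Lipschitz domains: since $u^k_i=u_i^*$ on $\partial_i\Omega$, the trace of $v^k_i$ equals $u_i^*/\norm{u^k}_{L^2}$, which in the regime $\norm{u^k}_{L^2}\to\infty$ tends to $0$ weakly in $H^{1/2}(\partial_i\Omega)$. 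Hence $c_i=0$ on a set of positive $(n-1)$-dimensional Hausdorff measure, so $c_i=0$ and $v\equiv 0$, contradicting $\norm{v}_{L^2}=1$.

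The main obstacle is that $\mathcal{V}$ is an affine rather than linear subspace, so the usual scaling trick only closes cleanly once the boundary trace of the $H^1$-weak limit has been identified. The cleanest route is to first prove the inequality on the linear space $\mathcal{V}_0=\{v\in H^1(\Omega;\mathbb{R}^2):v_i=0\text{ on }\partial_i\Omega\}$, where the contradiction argument above terminates immediately after the connectedness step, and then transfer to $\mathcal{V}$ by splitting $u=u_0+\tilde u^*$ for a fixed $H^1$ extension $\tilde u^*$ of the boundary data and bounding $\norm{\nabla u_0}_{L^2}\le \norm{\nabla u}_{L^2}+\norm{\nabla\tilde u^*}_{L^2}$; the fixed datum $\tilde u^*$ is absorbed into the constant. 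The only ingredient beyond weak compactness, Rellich, and connectedness is the weak-continuity of the trace, which is standard for Lipschitz boundaries.
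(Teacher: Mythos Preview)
Your argument follows the same contradiction-plus-Rellich route as the paper. The only structural difference is in how the affine nature of $\mathcal{V}$ is handled: the paper subtracts the (constant) boundary data \emph{before} normalizing, setting $v^k=(u^k-u^*)/\|u^k-u^*\|_{L^2}$ so that $v^k\in\mathcal{V}_0$ from the start, and then runs exactly the compactness argument you describe for $\mathcal{V}_0$ (boundedness in $H^1$, compact embedding into $L^2$, constant limit, vanishing trace forces the constant to be zero).

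One point to flag in your transfer step from $\mathcal{V}_0$ to $\mathcal{V}$: it does not actually yield the homogeneous inequality stated. Writing $u=u_0+\tilde u^*$ and applying Poincar\'e to $u_0$ gives only
\[
\|u\|_{L^2}\le c\,\|\nabla u\|_{L^2}+c\,\|\nabla\tilde u^*\|_{L^2}+\|\tilde u^*\|_{L^2},
\]
i.e.\ an estimate with an additive constant depending on the datum; the $\tilde u^*$-terms are additive, not multiplicative, and cannot be ``absorbed into the constant'' $c$. In fact the homogeneous estimate on $\mathcal{V}$ is false whenever $u^*\neq 0$: the constant function $u\equiv(u_1^*,u_2^*)$ lies in $\mathcal{V}$, has $\nabla u=0$, and $\|u\|_{L^2}>0$. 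The paper's own proof glosses over the same issue (after subtracting $u^*$, the claim $\|\nabla v^k\|_{L^2}<1/k$ needs $\|u^k\|_{L^2}\le\|u^k-u^*\|_{L^2}$, which is not guaranteed). For the application in the existence theorem only the additive-constant version is needed, and that is what both arguments genuinely deliver.
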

\begin{proof}
We argue by contradiction. Were the stated estimate false, there would exist for each positive integer $k$ a function $u^k\in \mathcal{V}$ satisfying 
$$
\norm{u^k}_{L^2} > k \norm{\nabla u^k}_{L^2}.
$$
We renormalize by defining 
$$
v^k:=\frac{\{u^k_1-u^*_1,u^k_2-u^*_2\} }{\norm{\{u^k_1-u^*_1,u^k_2-u^*_2\}}_{L^2}} 
$$
and note that $v^k \in {\mathcal V}_0$ and $||v^k||_{L^2} = 1$.  It follows that 
$$
\norm{\nabla v^k}_{L^2}<\frac{1}{k}.
$$
In particular the functions $\{v^k\}_{k=1}^\infty$ are bounded in $H^1$.  It follows (e.g., \cite{dacorognadirect}, Thm. 12.11) that there exists a subsequence $\{v^{k_j}\}_{k=1}^\infty\subset \{v^k\}_{k=1}^\infty$ and a function $v\in L^{2}(\Omega)$ such that 
$
v^{k_j}\rightarrow v\text{   in }L^2.
$
Further, the strong convergence implies that $v \in {\mathcal V}_0$ and $\norm{v}_{L^2}=1$.
On the other hand, the bound on $\nabla v^k$ from above implies that $\nabla v =0$ a.e., and that $v$ is constant since $\Omega$ is connected.  Since $v \in {\mathcal V}_0$, $v=0$ on $\Omega$ contradicting the conclusion $\norm{v}_{L^2} =1$.

\end{proof}

%
%%%%%%%%%%%%%%%%%%%%%%%%%%%%%%
%\subsection{Gamma convergence of diffuse interface problem to the sharp interface problem}
%
%{\color{blue} Perhaps a to do.  Leave for now.}

%%%%%%%%%%%%%%%%%%%%%%%%%%%%%%%%%%%%%%%%%%%%%%%%%
\section{Optimal design problem} \label{sec:opt}

We seek to find the arrangement of the two phases with a given volume of phase 1, $v$, that maximizes the normalized flux through the material:
\begin{equation}\label{eq:tp}
O := \sup \left\{
 \int_{\partial_1 \Omega} u_1^* k_1 \nabla u_1 \cdot \hat{n} \, dA - 
 \int_{\partial_2 \Omega} u_2^* k_2 \nabla u_2 \cdot \hat{n} \, dA
: \chi \in {\mathcal X}, \int_\Omega \chi dx = v
\right\} .
\end{equation}
Note that $k_i \nabla u_i \cdot \hat{n}$ gives the inward flux per unit area of species $i$ into $\Omega$.  We normalize each flux by the prescribed concentration.  Integrating by parts, using the variational characterization of the governing equations, and introducing a Lagrange multiplier to enforce the constraint on the given volume of phase 1, yields
\begin{equation}\tag{P} \label{eq:saddle}
O= \sup_{\chi\in \mathcal{X}}\inf_{u\in\mathcal{V}}\left\{L(u,\chi) = \int_\Omega \frac{1}{2} \sum_{i=1,2} k_i |\nabla u_i|^2 + \frac{1}{2} \chi(1-\chi)u \cdot A u -\lambda\chi\,dx\right\}.
\end{equation}

\section{Characterization of the optimal design problem} \label{sec:relaxed}

%%%%%%%%%%%%%%%%%%
\subsection{Saddle point theorem} \label{sec:saddle}
We begin by showing that we can exchange the order of finding the supremum and infimum in the saddle point problem (\ref{eq:saddle}).
\begin{theorem} \label{th:exchange}
There exists $\bar{v} \in {\mathcal V}$, $\bar{\chi} \in {\mathcal{X}}$ such that 
$$L(\bar{v},\bar{\chi})=\sup_{\chi\in \mathcal{X}}\inf_{v\in \mathcal{V}}L(v,\chi)=\inf_{v\in \mathcal{V}}\sup_{\chi\in \mathcal{X}}L(v,\chi).$$
for the saddle point problem (\ref{eq:saddle}).
\end{theorem}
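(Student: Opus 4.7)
The plan is to apply a classical saddle point theorem, for instance Proposition VI.2.4 in Ekeland and T\'emam's \emph{Convex Analysis and Variational Problems}, which yields a saddle point $(\bar v,\bar\chi)$ under convexity--concavity, weak lower/upper semi-continuity, and a one-sided coercivity condition on the unbounded factor. Once a saddle point is in hand, the chain
$$
\sup_{\chi\in\mathcal X}\inf_{v\in\mathcal V}L(v,\chi)\;\ge\;\inf_{v\in\mathcal V}L(v,\bar\chi)=L(\bar v,\bar\chi)=\sup_{\chi\in\mathcal X}L(\bar v,\chi)\;\ge\;\inf_{v\in\mathcal V}\sup_{\chi\in\mathcal X}L(v,\chi),
$$
combined with the trivial $\sup\inf\le\inf\sup$, delivers both equality and attainment at once.

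First I would check the structural hypotheses. The set $\mathcal V$ is a closed affine subset of the reflexive Hilbert space $H^1(\Omega;\mathbb R^2)$, while $\mathcal X$ is a closed, convex, bounded subset of $L^2(\Omega)$, hence weakly compact. For fixed $\chi\in\mathcal X$, the map $u\mapsto L(u,\chi)$ is convex and sequentially weakly lower semi-continuous on $\mathcal V$; both facts were established in the proof of Theorem~\ref{existence}. For fixed $u\in\mathcal V$, the map $\chi\mapsto L(u,\chi)$ is concave: the gradient term is linear in $\chi$ because each $k_i$ is affine in $\chi$; the coupling $\chi(1-\chi)\,u\cdot Au$ is concave pointwise in $\chi$ because $\chi\mapsto\chi(1-\chi)$ is concave and $u\cdot A u = k_s(u_1-u_2)^2\ge 0$; and $-\lambda\chi$ is linear.

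Next I would verify weak upper semi-continuity of $L(u,\cdot)$ on $\mathcal X$. The pieces that are linear in $\chi$ are weakly continuous, so the only nontrivial contribution is $-\tfrac12\int_\Omega\chi^2\,(u\cdot Au)\,dx$. Since $u\cdot A u\ge 0$, the functional $\chi\mapsto\tfrac12\int\chi^2(u\cdot Au)\,dx$ is convex and strongly continuous on $L^2$, hence weakly lower semi-continuous, and its negative gives the required weak upper semi-continuity. The coercivity condition on the unbounded factor $\mathcal V$ is verified at, e.g., $\chi_0\equiv 1/2$: then $k_i(\chi_0)=(k_{i1}+k_{i2})/2>0$ and the coupling term is nonnegative, so Poincar\'e's inequality (Lemma~\ref{poincare}) gives $L(u,\chi_0)\to+\infty$ as $\|u\|_{H^1}\to\infty$ with $u\in\mathcal V$. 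No coercivity is needed on the $\mathcal X$ side since $\mathcal X$ is already bounded.

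I expect the main obstacle to lie not in any single estimate but in the weak upper semi-continuity of $L(u,\cdot)$: the coupling is genuinely non-convex in $\chi$, and it is precisely the sign $u\cdot Au\ge 0$ (equivalently, $A\succeq 0$) that preserves concavity under weak $L^2$ limits of the design variable. If one preferred to avoid quoting a general saddle point theorem, the same ingredients would let one invoke Sion's minimax theorem directly on the weakly compact convex set $\mathcal X$, after a routine compactness reduction in $u$ based on the uniform $H^1$-bound on minimizers provided by the coercivity argument of Theorem~\ref{existence}.
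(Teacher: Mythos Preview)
Your proposal is correct and follows essentially the same route as the paper: both invoke the Ekeland--T\'emam saddle point theorem (Proposition~2.4/VI.2.4), verify convexity and weak lower semi-continuity in $u$, concavity and weak upper semi-continuity in $\chi$, and establish coercivity on the unbounded factor by testing at $\chi_0\equiv 1/2$ together with Lemma~\ref{poincare}. Your treatment of the weak upper semi-continuity of $\chi\mapsto L(u,\chi)$ via the sign $u\cdot Au\ge 0$ is in fact more explicit than the paper's Proposition~\ref{usc}, which simply asserts the claim from the quadratic-plus-linear structure.
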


The proof of this draws from the following theorem adapted from Ekeland and T\'emam \cite{et}.

\begin{theorem} [Proposition 2.4 of \cite{et}] 
\label{th:et}
Suppose two reflexive Banach spaces $V$ and $Z$ satisfy 
\begin{enumerate}[(i)]
\item $\mathcal{A}\subset V$ is convex, closed and non-empty, 
\item $\mathcal{B}\subset Z$ is convex, closed and non-empty. 
\end{enumerate}
Further let the function $L:\mathcal{A}\times\mathcal{B}\mapsto\mathbb{R}$ satisfy
 \begin{enumerate}[(i)]
  \setcounter{enumi}{2}
\item $\forall u\in\mathcal{A},\,p\rightarrow L(u,p)$ is concave and upper semicontinuous, 
\item $\forall p\in\mathcal{B},\,u\rightarrow L(u,p)$ is convex and lower semicontinuous,
\item there exists $p_0\in\mathcal{B}$ for $\mathcal{B}$ bounded such that 
$$
\lim_{\substack{u\in\mathcal{A}\\ \norm{u}\rightarrow\infty}}L(u,p_0)=+\infty.\\
$$
 \end{enumerate}
Then $L$ possesses at least one saddle point on $\mathcal{A}\times\mathcal{B}$.   
%Further, the set $\mathcal{A}_0\times\mathcal{B}_0$ of the saddle points of $L$ is convex.  Finally, if $p\rightarrow L(u,p)$ is strictly concave, $\forall u\in \mathcal{A},$ then $\mathcal{B}_0$ contains at most one point; if $u\rightarrow L(u,p)$ is strictly convex, $\forall p\in\mathcal{B}$, then $\mathcal{A}_0$ contains at most one point. 
\end{theorem}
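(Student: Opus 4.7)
The plan is to apply the abstract Theorem \ref{th:et} with $V=H^1(\Omega;\mathbb{R}^2)$, $Z=L^2(\Omega)$, $\mathcal{A}=\mathcal{V}$, $\mathcal{B}=\mathcal{X}$, and $L$ the Lagrangian appearing in (\ref{eq:saddle}). Both $V$ and $Z$ are Hilbert, and hence reflexive. Once the five hypotheses (i)--(v) are verified, the theorem produces a saddle point $(\bar v,\bar\chi)\in\mathcal{V}\times\mathcal{X}$, and the defining saddle point inequalities $L(\bar v,\chi)\le L(\bar v,\bar\chi)\le L(v,\bar\chi)$ immediately yield
\begin{equation*}
\sup_{\chi\in\mathcal{X}}\inf_{v\in\mathcal{V}} L(v,\chi)\;=\;L(\bar v,\bar\chi)\;=\;\inf_{v\in\mathcal{V}}\sup_{\chi\in\mathcal{X}} L(v,\chi),
\end{equation*}
which is the claim.

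First I would dispatch the hypotheses on the admissible sets. $\mathcal{V}$ is the affine subspace of $H^1(\Omega;\mathbb{R}^2)$ cut out by prescribed traces, so it is convex, nonempty (take any $H^1$-extension of the boundary data), and closed by continuity of the trace operator. The set $\mathcal{X}=L^2(\Omega;[0,1])$ is convex and closed, since the pointwise bounds $0\le\chi\le1$ pass to a.e.\ subsequential limits of any $L^2$-convergent sequence; crucially, $\mathcal{X}$ is also bounded in $L^2$, with $\|\chi\|_{L^2}\le|\Omega|^{1/2}$, which is what allows hypothesis (v) to apply.

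Next I would verify the concavity/convexity structure of $L$. For fixed $u$, the pieces $\int \tfrac12 k_i|\nabla u_i|^2\,dx$ (affine in $\chi$ through (\ref{eq:cond})) and $-\int\lambda\chi\,dx$ are affine in $\chi$, while $\int\tfrac12\chi(1-\chi)\,u\cdot Au\,dx$ is a quadratic in $\chi$ with non-positive leading coefficient $-\tfrac12 u\cdot Au\le0$ (since $A$ is positive semidefinite); hence $\chi\mapsto L(u,\chi)$ is concave, and continuous on $L^2(\Omega)$ because its integrand is polynomial in $\chi$ with fixed $L^1$ weights $|\nabla u_i|^2$ and $u\cdot Au$. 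For fixed $\chi$, convexity of $u\mapsto L(u,\chi)$ follows from convexity of $|\nabla u_i|^2$ and of the quadratic form $u\cdot Au$, while weak lower semicontinuity on $H^1$ is precisely the property already established in the proof of Theorem \ref{existence}.

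Finally, for the coercivity hypothesis (v) I would choose the constant design $\chi_0\equiv\tfrac12\in\mathcal{X}$. With this choice the conductivities become the positive constants $\tfrac12(k_{i1}+k_{i2})$ and $\chi_0(1-\chi_0)=\tfrac14\ge0$, so
\begin{equation*}
L(u,\tfrac12)\;\ge\;c_1\,\|\nabla u\|_{L^2}^2\;-\;c_2
\end{equation*}
for constants $c_1>0$ and $c_2\ge0$ depending only on the $k_{ij}$, $\lambda$, and $|\Omega|$. If $\|u_n\|_{H^1}\to\infty$ along some sequence in $\mathcal{V}$, Lemma \ref{poincare} forces $\|\nabla u_n\|_{L^2}\to\infty$, whence $L(u_n,\tfrac12)\to+\infty$. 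Theorem \ref{th:et} then supplies the saddle point. The main obstacle is exactly this coercivity step: since $\mathcal{V}$ is affine rather than linear, care is needed to invoke Poincar\'e correctly against the boundary data, after which the proof is a fairly routine assembly of properties already extracted in the proof of Theorem \ref{existence}.
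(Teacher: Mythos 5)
Your proposal does not prove the statement it was asked to prove. Theorem \ref{th:et} is the abstract Ekeland--T\'emam saddle-point existence theorem: given reflexive spaces, closed convex nonempty sets $\mathcal{A}$ and $\mathcal{B}$ with $\mathcal{B}$ bounded, a convex--l.s.c.\,/\,concave--u.s.c.\ function $L$, and the coercivity condition (v), a saddle point exists. What you have written instead \emph{assumes} this theorem and verifies its hypotheses for the particular Lagrangian of (\ref{eq:saddle}) --- choice of $V=H^1$, $Z=L^2$, closedness of $\mathcal{X}$ and $\mathcal{V}$, concavity/convexity of the integrand, and coercivity via $\chi_0\equiv\tfrac12$ and Lemma \ref{poincare}. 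That is precisely the content of the paper's Theorem \ref{th:exchange} together with Propositions \ref{spaceproperties}, \ref{lsc} and \ref{usc}, and on that score your argument is essentially correct and matches the paper's line by line (same $p_0$, same use of Poincar\'e). But it leaves the abstract theorem itself entirely unproved; you have established the conclusion of the \emph{application}, not of the statement.

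To prove Theorem \ref{th:et} one needs genuinely different machinery. The standard route (and the one in Ekeland and T\'emam, whom the paper simply cites without reproducing the proof) is: (a) first treat the case where $\mathcal{A}$ is also bounded, so that both sets are weakly compact by reflexivity, and obtain a saddle point from a minimax theorem of von Neumann/Ky Fan type --- typically by reducing to finite-dimensional convex sections and invoking Brouwer's or Kakutani's fixed-point theorem, using convexity plus lower (resp.\ upper) semicontinuity to upgrade to weak lower (resp.\ upper) semicontinuity; (b) then remove the boundedness of $\mathcal{A}$ by working on the truncations $\mathcal{A}_n=\mathcal{A}\cap\{\norm{u}\le n\}$, using hypothesis (v) with the distinguished $p_0$ to show the resulting saddle points $u_n$ stay in a fixed ball, and passing to a weak limit. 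None of these ingredients --- weak compactness, the fixed-point/minimax step, or the truncation-and-coercivity argument --- appears in your write-up, so as a proof of the stated theorem it has a complete gap rather than a local one.
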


We apply this theorem with $L$ as in problem (\ref{eq:saddle}), $V = W^{1,2}(\Omega; \mathbb{R}^2)$, $Z=L^2(\Omega;[0,1])$, ${\mathcal A} = {\mathcal V}$ and $\mathcal{B} = \mathcal{X}$.  Clearly, $V$ and $Z$ are reflexive Banach spaces as required by the theorem above.  We now show that these satisfy the rest of hypothesis (H).

\begin{proposition}
\label{spaceproperties}
Both $\mathcal{X}$ and $\mathcal{V}$ are convex, closed, and non-empty.
\end{proposition}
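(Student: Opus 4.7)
The plan is to verify the three properties for each set separately, since the two verifications use different mechanisms: pointwise/almost-everywhere arguments for $\mathcal X$, and trace theory for $\mathcal V$. None of the individual steps is deep; the main subtlety is that closedness of $\mathcal V$ cannot be done pointwise and must invoke continuity of the trace, and non-emptiness of $\mathcal V$ presupposes suitable regularity of the data $u_i^*$.

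For $\mathcal X$, I would start with non-emptiness by exhibiting an explicit element, e.g.\ the constant function $\chi \equiv 0$ (or $\chi \equiv v/|\Omega|$ if the volume constraint is anticipated). Convexity follows because the pointwise constraint set $[0,1] \subset \mathbb{R}$ is convex: if $\chi_1, \chi_2 \in \mathcal X$ and $t \in [0,1]$, then $t\chi_1(x) + (1-t)\chi_2(x) \in [0,1]$ a.e., and $L^2$ is a vector space, so the convex combination lies in $\mathcal X$. For closedness, suppose $\chi^n \to \chi$ in $L^2(\Omega)$; extract a subsequence $\chi^{n_k} \to \chi$ pointwise a.e., and conclude $\chi(x) \in [0,1]$ a.e.\ since $[0,1]$ is closed in $\mathbb R$, hence $\chi \in \mathcal X$.

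For $\mathcal V$, non-emptiness reduces to producing one element satisfying the Dirichlet data; the standing assumption is that each $u_i^*$ is the trace of an $H^1(\Omega)$ function (e.g.\ $u_i^* \in H^{1/2}(\partial_i\Omega)$ and extendable), so an $H^1$ extension exists by the surjectivity of the trace operator on Lipschitz domains, and this extension lies in $\mathcal V$. Convexity is immediate from the linearity of the boundary condition: if $v, w \in \mathcal V$ and $t \in [0,1]$, then on $\partial_i \Omega$ the trace of $tv + (1-t)w$ equals $t u_i^* + (1-t) u_i^* = u_i^*$. Closedness is the one step that requires care: if $v^n \to v$ in $H^1(\Omega;\mathbb R^2)$, then by continuity of the trace operator $T : H^1(\Omega) \to H^{1/2}(\partial\Omega)$ and continuity of restriction to $\partial_i\Omega$, the traces $v_i^n|_{\partial_i\Omega}$ converge to $v_i|_{\partial_i\Omega}$ in $H^{1/2}(\partial_i\Omega)$, hence in $L^2(\partial_i\Omega)$; since $v_i^n|_{\partial_i\Omega} = u_i^*$ for every $n$, the limit equals $u_i^*$ and $v \in \mathcal V$.

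The only mild obstacle is the trace-continuity step for $\mathcal V$: it relies on the Lipschitz regularity of $\partial\Omega$ already assumed in Theorem~\ref{existence}, and implicitly on the data $u_i^*$ being compatible with some $H^1$ extension so that $\mathcal V$ is non-empty. Both are standard and can be cited from Evans or Dacorogna. Everything else is a one-line verification, so the writeup should be short.
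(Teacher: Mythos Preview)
Your proposal is correct and, in fact, more complete than the paper's own proof, which only explicitly treats the closedness of $\mathcal X$ and leaves the remaining properties implicit. The one genuine methodological difference is in that step: you show $\mathcal X$ is closed under \emph{strong} $L^2$ convergence by passing to an a.e.\ convergent subsequence and using that $[0,1]$ is a closed interval, whereas the paper proves closedness under \emph{weak} $L^2$ convergence directly, by using the uniform $L^\infty$ bound to extract a weak-$\ast$ convergent subsequence in $L^\infty$ and identifying the limits. Both routes are valid; yours is arguably more elementary, while the paper's is tailored to the weak topology actually used in the Ekeland--T\'emam saddle point theorem. Since $\mathcal X$ is convex, your strong closedness combined with Mazur's lemma yields weak closedness anyway, so nothing is lost. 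Your treatment of $\mathcal V$ via trace continuity is standard and the paper simply omits it.
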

\begin{proof}
The point of concern is showing that our space $\mathcal{X}$ is in fact closed. So consider a sequence $\chi_i\in \mathcal{X}$ such that 
$
\chi_i {\rightharpoonup} \chi
$
in $L^2$.
We seek to show that the limit function $\chi\in \mathcal{X}$. From the definition of $\mathcal{X}$, 
$\norm{\chi_i}_{L^\infty}\leq1.$ Thus, we can pick a subsequence $\chi_{i_k}$ of $\chi_i$ such that
$
\chi_{i_k}\overset{\ast}{\rightharpoonup}\bar{\chi},
$
in $L^\infty$ as $k\to \infty$.  It follows
$
\chi_{i_k}{\rightharpoonup}\bar{\chi},
$
in $L^2$ as $k\to \infty$.  Therefore, $\chi = \bar{\chi}$ and $\esssup \chi \le 1$.  Similarly we can show
$\essinf \chi \ge 0$.  Thus, $\chi \in {\mathcal X}$ and $\mathcal{X}$ is closed. 
\end{proof}

\begin{proposition}
\label{lsc}
For each $\chi\in \mathcal{X},$ $v\mapsto L(v,\chi)$ is convex and lower semicontinuous.\end{proposition}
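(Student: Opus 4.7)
The plan is to treat the two claims separately and exploit the special structure of $L(\cdot, \chi)$ as the sum of a quadratic Dirichlet-type term in the gradient, a bounded zeroth-order quadratic in $v$, and a term independent of $v$. The key structural observation is that the matrix
$$
A = k_s \begin{pmatrix} 1 & -1 \\ -1 & 1 \end{pmatrix}
$$
is symmetric and positive semidefinite, with eigenvalues $0$ and $2k_s \ge 0$. Also, since $\chi \in [0,1]$ a.e., the weight $\chi(1-\chi)$ is a nonnegative bounded measurable function of $x$.

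For convexity in $v$, I would examine each term. Since each $k_i(x) \ge \min(k_{i1}, k_{i2}) > 0$, the integrand $\frac{1}{2} k_i(x) |\nabla v_i|^2$ is a nonnegative quadratic form in $\nabla v_i$, hence convex in $v_i$. The pointwise bilinear form $v \mapsto \frac{1}{2} \chi(1-\chi)(x)\, v(x) \cdot A v(x)$ is convex at each $x$ because $\chi(1-\chi) \ge 0$ and $A \succeq 0$. The term $-\lambda \chi$ does not depend on $v$. A nonnegative linear combination (here, integration against $dx$) of convex functions is convex, so $v \mapsto L(v,\chi)$ is convex on $\mathcal{V}$.

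For lower semicontinuity, it suffices to show weak lower semicontinuity on $W^{1,2}(\Omega;\mathbb{R}^2)$, since $\mathcal{V}$ is a weakly closed affine subspace and strong lower semicontinuity follows from weak. Suppose $v^\nu \rightharpoonup v$ in $W^{1,2}$. I would split
$$
L(v^\nu, \chi) = \underbrace{\int_\Omega \frac{1}{2} \sum_i k_i |\nabla v_i^\nu|^2\, dx}_{I_1(v^\nu)} + \underbrace{\int_\Omega \frac{1}{2}\chi(1-\chi)\, v^\nu \cdot A v^\nu\, dx}_{I_2(v^\nu)} - \lambda \int_\Omega \chi \, dx,
$$
and handle the pieces separately. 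The functional $I_1$ is convex and continuous in the $H^1$ seminorm (in fact it is a weighted $L^2$ norm of $\nabla v$ with bounded measurable positive weights), hence weakly lower semicontinuous on $W^{1,2}$. For $I_2$, I would invoke the Rellich--Kondrachov compact embedding $W^{1,2}(\Omega) \hookrightarrow L^2(\Omega)$, which gives $v^\nu \to v$ strongly in $L^2$. Since $\chi(1-\chi)$ is bounded and $A$ is a fixed symmetric matrix, $I_2$ is continuous on $L^2$, hence weakly continuous on $W^{1,2}$. Adding a weakly-lsc function to a weakly continuous one preserves weak lower semicontinuity, yielding the claim.

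I do not anticipate a substantive obstacle: the main technical ingredient is the standard compact embedding combined with positivity of $A$ and $k_i$. The only minor point to verify carefully is that the quadratic form $A$ is indeed positive semidefinite (so that $I_2$ is convex, which is not strictly needed for the argument above but makes the structure transparent) and that the weights $k_i(x)$ inherited from (\ref{eq:cond}) are uniformly bounded below by $\min(k_{12}, k_{21}) > 0$, so that $I_1$ is a genuinely coercive weighted Dirichlet form rather than a degenerate one.
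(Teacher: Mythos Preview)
Your proof is correct and rests on the same structural observation as the paper: the integrand is a positive-definite quadratic in $\nabla v$ plus a positive-semidefinite quadratic in $v$, so convexity and weak lower semicontinuity are immediate. The paper's proof is a one-line appeal to this convexity, whereas you unpack the lower-semicontinuity argument by splitting off the zeroth-order term and using Rellich--Kondrachov; this is a valid but slightly more hands-on alternative to simply citing that a convex integrand (jointly in $(v,\nabla v)$) with the evident growth and continuity properties yields a weakly lower semicontinuous functional.
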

\begin{proof}
This follows trivially from the fact that the integrand in $L$ is a sum of a positive definite quadratic term in $\nabla u_i$ and a positive semidefinite quadratic form in $u$.

\end{proof}

\begin{proposition}
\label{usc}
For each $v\in \mathcal{V},$ $\chi\mapsto L(v,\chi)$ is concave and upper semicontinuous.
\end{proposition}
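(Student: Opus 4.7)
The plan is to decompose $L(v,\chi)$ explicitly in the variable $\chi$, read off pointwise concavity directly, and isolate the only nonlinear-in-$\chi$ term for the semicontinuity argument. First I would substitute (\ref{eq:cond}) so that $\sum_i \tfrac{1}{2} k_i |\nabla v_i|^2$ becomes explicitly affine in $\chi$, and expand $\chi(1-\chi) = \chi - \chi^2$. Because $A$ is positive semidefinite, $v \cdot A v = k_s(v_1-v_2)^2 \ge 0$ pointwise, so the coefficient of $\chi(x)^2$ in the integrand is $-\tfrac{1}{2} v(x)\cdot A v(x) \le 0$, while the $-\lambda\chi$ term is linear. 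Thus for every $x$ the integrand is a concave (indeed quadratic) function of the scalar $\chi(x) \in [0,1]$, and integration preserves concavity, so $\chi \mapsto L(v,\chi)$ is concave on $\mathcal{X}$.

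For upper semicontinuity I would work in the weak $L^2$ topology, which is the topology on $Z = L^2(\Omega;[0,1])$ relevant for Theorem \ref{th:et}. Writing
$$L(v,\chi) = \ell_v(\chi) - \tfrac{1}{2}\int_\Omega \chi^2 \, (v \cdot A v)\, dx,$$
where $\ell_v$ collects all the affine-in-$\chi$ pieces, I would observe that $\ell_v$ is a continuous linear functional on $L^2(\Omega)$ because its coefficients $|\nabla v_i|^2$, $v \cdot A v$, and $\lambda$ all lie in $L^1(\Omega)$ and $\mathcal{X}$ is bounded in $L^\infty$; hence $\ell_v$ is weakly continuous. For the remaining term $F(\chi) := \int_\Omega \chi^2 (v \cdot A v)\,dx$, the integrand $t \mapsto t^2 (v\cdot A v)(x)$ is convex for each $x$ since $v\cdot A v \ge 0$, so $F$ is convex. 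Strong lower semicontinuity of $F$ on $L^2$ follows from extracting an a.e.-convergent subsequence from any strongly convergent $\chi_n \to \chi$ and applying Fatou's lemma (with the nonnegative integrand $\chi_n^2 (v \cdot A v)$ dominated in $L^1$ using $\chi_n^2 \le 1$). Mazur's theorem then upgrades strong LSC to weak sequential LSC for the convex functional $F$, so $-\tfrac{1}{2} F$ is weakly sequentially upper semicontinuous. Adding the weakly continuous $\ell_v$ yields weak sequential USC of $\chi \mapsto L(v,\chi)$, as required.

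The only point requiring any care is checking the integrability that makes the Fatou step rigorous; once one notes that $v \cdot A v \in L^1(\Omega)$ (automatic from $v \in H^1 \hookrightarrow L^2$) and that $\chi$ takes values in $[0,1]$, there is no real obstacle and the proof reduces to bookkeeping.
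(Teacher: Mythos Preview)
Your argument is correct and follows the paper's approach, which simply notes that the integrand is a negative-semidefinite quadratic plus linear terms in $\chi$. One minor point: $\ell_v$ is not literally a continuous linear functional on all of $L^2$ since its coefficients $|\nabla v_i|^2$ and $v\cdot Av$ lie only in $L^1$, but your invocation of the $L^\infty$ bound on $\mathcal{X}$ is exactly what rescues weak continuity; a slightly cleaner route, having already established concavity, is to observe that $\chi \mapsto L(v,\chi)$ is strongly continuous on $\mathcal{X}$ by dominated convergence and then apply Mazur's lemma once to the concave functional directly, avoiding the separate treatment of $\ell_v$ and $F$.
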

\begin{proof} 
This follows trivially from the fact that the integrand in $L$ is a sum of a positive definite quadratic term and two linear terms in $\chi$.
\end{proof}

\begin{proof}[Proof of Theorem \ref{th:exchange}]
From the aforementioned propositions, we have satisfied requirements $(i)-(iv)$ of the theorem.  To show $(v)$, set  $\chi(x)=1/2$.  We have
\begin{align*}
L(u,0.5)&= \int_\Omega 
\left( \frac{1}{2} \sum_{i=1,2} \bar{k}_i |\nabla u_i|^2 + \frac{1}{8}u \cdot A u -\frac{1}{2}\lambda \right) \,dx\\
&\geq \int_\Omega \left( \frac{1}{2} \sum_{i=1,2} \bar{k}_i |\nabla u_i|^2  -\frac{1}{2}\lambda \right) \, dx\\
&\geq  c_1 || \nabla u||_{L^2} -c_2
 \geq c_3 || u||_{H^1} -c_2
\end{align*}
for suitable positive constants $c_i$, where we use the derived form of Poincar{\'e}'s inequality in the final step.
The requirement $(v)$ follows.\\
\end{proof}

%%%%%%%%%%%%%%%%%%%%%%%%%%%%%
 \subsection{Explicit characterization}
 
 We are now ready to obtain the explicit characterization of the optimal design problem (\ref{eq:saddle}).
 
 \begin{theorem}
 We have
 $$
 O = \min_{v\in\mathcal{V}} \int_\Omega \overline{W}(v,\nabla v)\,dx.
 $$
 where 
% \[
%\bar{W}(v,\xi)=\max\begin{cases}
%\frac{\Psi(v,\xi)}{8 (v_1-v_2)^2}\left(|\nabla v_1|^4 \Delta k_1^2+|\nabla v_2|^4 \Delta k_2^2+2 |\nabla v_1|^2|\nabla v_2|^2 \Delta k_1 \Delta k_2\right.\\
%\;\;\;\;+2 |\nabla v_1|^2 \left(-2 \lambda  \Delta k_1+(k_{11}+k_{12}) (v_1-v_2)^2\right)\\
%\;\;\;\;\left.+2 |\nabla v_2|^2 \left(-2 \lambda  \Delta k_2+(k_{21}+k_{22}) (v_1-v_2)^2\right)\right.\\
%\;\;\;\;+\left.\left((v_1-v_2)^2-2 \lambda \right)^2\right),\\
%\frac{1}{2} \left(|\nabla v_1|^2 k_{11}+|\nabla v_2|^2 k_{21}-2 \lambda \right),\\
%\frac{1}{2} \left(|\nabla v_1|^2 k_{12}+|\nabla v_2|^2 k_{22} \right)
%\end{cases}
%\]
%
$$
\overline{W}(v,\xi) = \begin{cases}
\frac{1}{2} \left(|\xi_1|^2 k_{21}+|\xi_2|^2 k_{22} \right)
& (v,\xi) \in {\mathcal R}_0,
\\ \\
\displaystyle{
\frac{ \left(\sum_i \Delta k_i |\xi_i|^2\right)^2 
+2 \sum_i |\xi_i|^2 (k_v(k_{i1}+k_{i2}) -  2\lambda \Delta k_i)
+ (k_v - 2 \lambda)^2}{8k_v}}  & (v,\xi) \in {\mathcal R},
\\ \\
\frac{1}{2} \left(|\xi_1|^2 k_{11}+|\xi_2|^2 k_{12}-2 \lambda \right) 
& (v,\xi) \in {\mathcal R}_1
\end{cases} 
$$
with
\begin{align*}
{\mathcal R}_0 &= \{ (v,\xi): \sum_i \Delta k_i |\xi_i|^2 - 2\lambda \leq -k_v \},  \\
{\mathcal R} &= \{ (v,\xi): -k_v < \sum_i \Delta k_i |\xi_i|^2 - 2\lambda < k_v \}, \\
{\mathcal R}_1 &= \{ (v,\xi): \sum_i \Delta k_i |\xi_i|^2 - 2\lambda \geq k_v \},
\end{align*}
and 
\begin{align*}
\Delta k_i = k_{i1} - k_{i2} , \quad
k_v = k_s(v_1-v_2)^2.
\end{align*}

\end{theorem}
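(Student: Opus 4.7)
My plan is to derive the explicit characterization of $\overline{W}$ by first exchanging the order of sup and inf in the saddle point problem via Theorem \ref{th:exchange}, then carrying out the inner supremum over $\chi$ pointwise. Concretely, since Theorem \ref{th:exchange} gives
$$
O = \inf_{v \in \mathcal{V}} \sup_{\chi \in \mathcal{X}} L(v,\chi),
$$
and since for fixed $v$ the integrand of $L$ depends on $\chi$ only through its pointwise value at $x$, the first key step is to justify
$$
\sup_{\chi \in \mathcal{X}} L(v,\chi) = \int_\Omega \sup_{s \in [0,1]} W(v(x), \nabla v(x), s) \, dx,
$$
where $W(v, \xi, s) = \tfrac{1}{2} \sum_i k_i(s) |\xi_i|^2 + \tfrac{1}{2} s(1-s)\, v \cdot A v - \lambda s$ with $k_i(s) = k_{i1} s + k_{i2}(1-s)$. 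The exchange is standard once a measurable selector is exhibited; I would produce one directly from the explicit formula for the optimal $s$ computed below, which will depend continuously (piecewise) on $(v, \xi)$ and hence yields a measurable $\chi^\star(x)$ when composed with the measurable functions $v$ and $\nabla v$.

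The second step is the pointwise optimization. Using $v \cdot A v = k_s (v_1 - v_2)^2 = k_v$ and writing
$$
W(v,\xi,s) = \tfrac{1}{2} \sum_i k_{i2} |\xi_i|^2 + s\Bigl(\tfrac{1}{2}\sum_i \Delta k_i |\xi_i|^2 - \lambda\Bigr) + \tfrac{1}{2} k_v \, s(1-s),
$$
the map $s \mapsto W$ is a concave quadratic in $s$ (with leading coefficient $-k_v/2 \leq 0$). Setting $a = \tfrac{1}{2}\sum_i \Delta k_i |\xi_i|^2 - \lambda$ and $b = k_v/2$, the unconstrained critical point is $s^\star = (a+b)/(2b)$, which lies in $(0,1)$ iff $|a| < b$, i.e., iff $-k_v < \sum_i \Delta k_i |\xi_i|^2 - 2\lambda < k_v$; this is precisely the region ${\mathcal R}$. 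The cases $a \leq -b$ and $a \geq b$ correspond respectively to the max being attained at $s = 0$ (region ${\mathcal R}_0$) and $s = 1$ (region ${\mathcal R}_1$).

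The third step is algebraic: substitute each optimal $s$ back into $W$. The boundary cases immediately give the quadratic forms listed for ${\mathcal R}_0$ and ${\mathcal R}_1$. In the interior case,
$$
W(v,\xi,s^\star) = \tfrac{1}{2}\sum_i k_{i2} |\xi_i|^2 + \frac{(a+b)^2}{4b} = \tfrac{1}{2}\sum_i k_{i2} |\xi_i|^2 + \frac{(\sum_i \Delta k_i |\xi_i|^2 + k_v - 2\lambda)^2}{8 k_v},
$$
and expanding the square and combining with $4 k_v \sum_i k_{i2} |\xi_i|^2$ in a common denominator of $8 k_v$, using the identity $\Delta k_i \, k_v + 2 k_v k_{i2} = k_v(k_{i1} + k_{i2})$, produces exactly the expression stated for the region ${\mathcal R}$. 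Finally, attainment of the outer minimum is automatic: the saddle point $(\bar v, \bar \chi)$ supplied by Theorem \ref{th:exchange} gives $\bar v \in \mathcal{V}$ realizing the infimum, so ``$\inf$'' may be replaced by ``$\min$.''

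The only genuinely delicate step is the first one, the sup--integral exchange together with the measurable-selection claim; everything else is a concave-quadratic optimization followed by routine algebra. Since the pointwise optimizer is given by a piecewise-explicit continuous formula in $(v,\xi)$, measurability of $\chi^\star$ follows immediately, which is why I expect this step to go through without difficulty despite being the conceptual crux.
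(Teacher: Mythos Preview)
Your proposal is correct and follows essentially the same route as the paper: invoke Theorem~\ref{th:exchange} to swap sup and inf, then optimize pointwise in $\chi$ over $[0,1]$ using the concavity of $W(v,\xi,\cdot)$, and substitute back. The only differences are cosmetic --- you are more explicit than the paper about the measurable-selection step justifying the sup--integral exchange, and you read off the optimal endpoint directly from concavity whereas the paper verifies it via the three comparison identities $W(v,\xi,\chi^*)-W(v,\xi,0)$, $W(v,\xi,\chi^*)-W(v,\xi,1)$, $W(v,\xi,1)-W(v,\xi,0)$.
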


%The function $\bar{W}$ is shown in Figure \ref{fig:Wbar}. We use $\Psi(v,\xi)$ as an indicator function for the shaded region $\mathcal{R}$, seen in Figure \ref{fig:Wbar}(a), where intermediate densities are favorable for calculated concentration fields and their associated gradients. The plot seen in Figure \ref{fig:Wbar}(b) is a plot of the energy as a function of $(\nabla v)$, for a sample set of transport parameters, and for a fixed value of $v=\{v_1,v_2\}$. The opposing sides to the well correspond to the discrete phases $\chi=\{0,1\}$, and region $\mathcal{R}$ acts as a smooth transition between these two. We use that fact that maximizing over convex functions yields a convex function. In other words, when we maximize $W(v,\xi,\chi),$ which is convex in $|\nabla v|$, over $\chi$, we must obtain a convex function in $|\nabla v|$ regardless of the value that $\chi$ takes.\\
The function $\overline{W}$ is shown in Figure \ref{fig:Wbar} as a function of $\xi$ for a fixed $v$ with various parameters. The shaded regions indicate the gradients for which mixed phases ($\chi\in(0,1)$) occur. Note that mixed phases occur where the gradients of both species are comparable in magnitude, and pure phases occur otherwise. 

\begin{figure}
\label{fig:Wbar}
\centering
\vspace{0.5in}
	\includegraphics[width=0.7\textwidth]{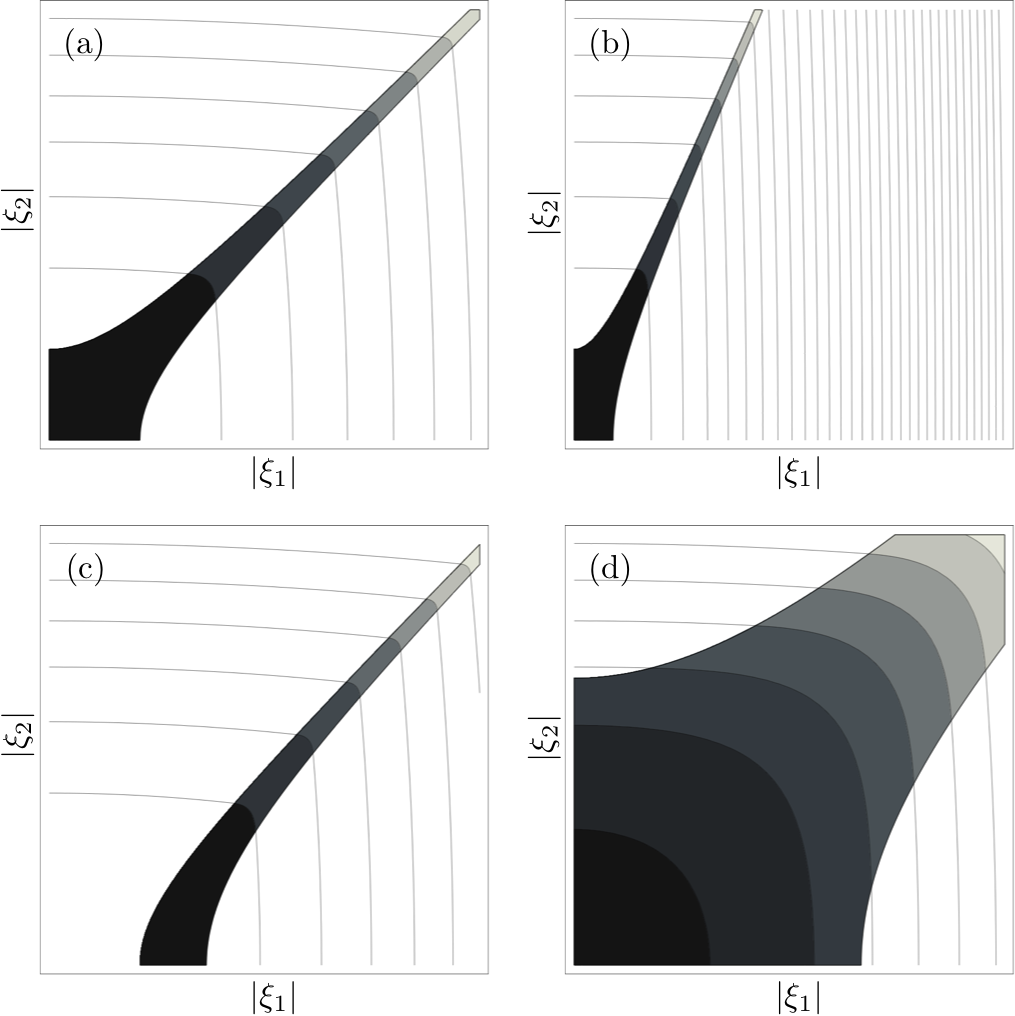}
\vspace{0.5in}
\caption{Contour plot of $\overline{W}$ for fixed $v,\lambda$.  The shaded regions indicate where mixed phase $(\chi\in(0,1))$ occurs.  (a) $k_{11} = k_{22} = k_s = 1, k_{12} = k_{21} =0.1, (v_1 - v_2)^2=1, \lambda = 0$.
(b) Parameters as in (a) except $k_{11} = 5$, (c) Parameters as in (a) except $\lambda = 1$, (d) Parameters as in (a) except $(v_1-v_2)^2 = 10 $.  }
\end{figure}

\begin{proof}
For $v \in {\mathbb R}^2, \xi \in {\mathbb R}^{2\times2}, \chi \in {\mathbb R}$, set 
$$
W(v,\xi,\chi) =  \frac{1}{2}
\sum_{i=1,2} (\chi k_{i1} + (1-\chi)k_{i2}) |\xi_i|^2 
+ \frac{k_s}{2} \chi(1-\chi) v \cdot A v -\lambda\chi  
$$
and
\begin{align} \label{eq:barw}
\overline{W}(v,\xi) = \max_{\chi \in [0,1]} W(v,\xi,\chi) .
\end{align}
In light of the saddle point theorem,
$$
O = \inf_{v\in \mathcal{V}}\sup_{\chi\in \mathcal{X}} \int_\Omega W(v, \nabla v, \chi) dx = 
\inf_{v\in \mathcal{V}} \int_\Omega \overline{W} (v,\nabla v) dx.
$$
It remains to compute $\overline{W}$.   To that end, note that for a fixed $v,\xi$, $W(v,\xi,\chi)$ is quadratic in $\chi$ and 
$$
\frac{\partial W}{\partial \chi}(v,\xi,\chi) = 0
$$
has an unique solution $\chi = \chi^*$.  A simple calculation reveals
$$
\chi^*(v,\xi)=\frac{ \sum_i \Delta k_i |\xi_i|^2  + k_s(v_1-v_2)^2 -2 \lambda }{2 k_s (v_1-v_2)^2}.
$$
Notice that 
$$
\chi^*(v,\xi) \begin{cases}
\le 0 & (v,\xi) \in {\mathcal R}_0, \\
\in (0,1) &  (v,\xi) \in {\mathcal R},\\
\ge 1 &  (v,\xi) \in {\mathcal R}_1.
\end{cases}
$$
A longer, but straightforward, calculation reveals that 
$$
W(v,\xi,\chi^*) = \frac{ \left(\sum_i \Delta k_i |\xi_i|^2\right)^2 
+2 \sum_i (|\xi_i|^2 k_v(k_{i1}+k_{i2}) -  2\lambda \Delta k_i)
+ (k_v - 2 \lambda)^2}{8k_v} .
$$ 
Similarly, 
\begin{align*}
W(v,\xi,0) &= \frac{1}{2} \left(|\xi_1|^2 k_{21}+|\xi_2|^2 k_{22} \right), \\
W(v,\xi,1) &= \frac{1}{2} \left(|\xi_1|^2 k_{11}+|\xi_2|^2 k_{12}-2 \lambda \right) .
\end{align*}

Now, we can verify by explicit calculation that 
\begin{align} \label{eq:wcomp}
W(v,\xi, \chi^*) - W(v,\xi, 0) &= \frac{k_v}{2} (\chi^*(v,\xi))^2  \nonumber \\
W(v,\xi, \chi^*) - W(v,\xi, 1) &= \frac{1}{4} (\chi^*(v,\xi)-1 )^2 \\
W(v,\xi, 1) - W(v,\xi, 0) &= \frac{1}{2} \left( \sum_i \Delta k_i |\xi_i|^2 - 2 \lambda \right)  \nonumber 
\end{align}

We obtain the desired result by recalling (\ref{eq:barw}), rewriting
$$
\overline{W} (v,\xi) = \max \{ \Psi(v,\xi) W(v,\xi, \chi^*), W(v,\xi, 0), W(v,\xi,1) \}
$$
where 
$$
\Psi(v,\xi) = \begin{cases}
1 & (v,\xi) \in {\mathcal R}\\
- \infty & else
\end{cases}
$$
and using (\ref{eq:wcomp}).

\end{proof}

%%%%%%%%%%%%%%%%%%%%%%%%%
\section{Phase-field formulation of the optimal design problem} \label{sec:phasefield}

The min-max problem based on the functional $L$ is difficult to solve numerically due to the fact that $\chi$ is only in $L^2$ and because of the constraint $\chi \in [0,1]$. The relaxed functional is also difficult to solve numerically since $\overline W$ is not strictly convex. Therefore, we now pursue an alternative approach to the optimal design problem that is amenable to numerical treatment.  We regularize the functional $L(u,\chi)$ by adding the $L^2$ norm of $ \nabla \chi$ and requiring $\chi \in H^1$. We also replace the constraint $\chi \in [0,1]$ with a penalty. Finally, from a practical point of view, it would also be beneficial to have solutions that prefer the pure phases $\chi \in \{0,1\}$. Therefore, we add a term to the energy that penalizes any deviation from this set.

We consider the functional
$$
\mathcal{L}(u,\chi) = \int_\Omega \left( \frac{1}{2} \sum_{i=1,2} k_i |\nabla u_i|^2 + \frac{1}{2} \chi(1-\chi)u \cdot A u -\lambda\chi - \left( \alpha W(\chi) +\beta |\nabla\chi|^2  \right) \right) \,dx,
$$
where 
$$
W(\chi)= \chi^2(1-\chi)^2,
$$
has two wells at $\chi \in \{0,1\}$.  

The additional terms in parenthesis form the integrand of the Allen-Cahn functional \cite{allencahn}. Minimizers of this functional partition the domain into regions where $\chi \approx 0$ and $\chi \approx 1$ separated by transition layers with thickness $\sim\sqrt{\beta/\alpha}.$ In our setting, we expect this to be modified by the transport energy.
%
%{\color{blue} Verify and state as theorem. We can follow the methods of Section \ref{sec:saddle} to prove that this functional has a saddle point $(u^*,\chi^*)$
%$$
%\min_{u\in {\mathcal V}} \ \max_{\chi \in H^1}  {\mathcal L}(u,\chi)
%=  \max_{\chi \in H^1} \ \min_{u\in {\mathcal V}}  {\mathcal L}(u,\chi)
%=  {\mathcal L}(u^*,\chi^*)
%$$
%}

We seek to find the saddle point by considering a gradient flow:
\begin{align*}
\int_\Omega \frac{\partial \chi}{\partial t}\varphi \ dx 
&= \frac{1}{d_\chi} \left\langle \delta_\chi, \varphi \right\rangle \\
\int_\Omega \frac{\partial u}{\partial t}\psi \ dx 
&= - \frac{1}{d_u} \left\langle \delta_u, \psi \right\rangle
\end{align*}
for every $\varphi, \psi \in H^1(\Omega;\mathbb{R}^N)$ subject to the appropriate boundary conditions where $\langle \cdot \rangle$ denotes the $L^2$ inner product and $d_\chi,d_u > 0$ are the inverse mobilities.
We obtain the following system of equations:
\begin{align}
&d_\chi \frac{\partial \chi}{\partial t}=\sum_{i=1,2}\frac{k_i'}{2}|\nabla u_i|^2+\frac{1}{2}u\cdot Au(1-2\chi)-\lambda  + \beta\nabla^2\chi-\alpha W'(\chi), \label{eq:numchi}\\
&d_u\frac{\partial u_i}{\partial t}=\nabla\cdot k_i \nabla u_i-\chi(1-\chi) A_{ij} u_j . \label{eq:numu}
\end{align}

%%%%%%%%%%%%%%%%%%%%%%%%%
\section{Numerical study of the optimal design problem}

We have implemented the phase field formulation of the optimal design problem (\ref{eq:numchi}, \ref{eq:numu}) using the commercial software COMSOL \cite{comsol}. All our simulations are in two dimensions ($n=2$).  We work with non-dimensional units where the size of the domain, the concentration at a boundary and the (diagonal components of the) diffusion coefficient are $\mathcal{O}(1)$. We discretize the problem spatially using linear finite elements generated by Delaunay triangulation, and integrate the resulting ordinary differential equation in time by using the backward differentiation formula. We impose the volume constraint as a global constraint that is built into COMSOL.  Additionally, we impose a point-wise constraint restricting $\chi \in [0,1]$. We typically begin with an initial guess of uniform $\chi$, and run the simulations until an apparent steady state is reached (i.e., when the right hand sides of (\ref{eq:numchi}, \ref{eq:numu}) become small compared to a given tolerance). The simulations can get stuck in local optima, but we try to avoid this by doing parameter sweeps and studying additional initial conditions.
 
%%%%%%%%%%%%%%%%%%%%%%%%%
\subsection{Square reactor}

We begin with a square domain, $\Omega = (0,1)^2$, shown in Figure \ref{fig:square1}(a).  We prescribe $u_1 = 1$ on the left face $\partial_1 \Omega = \{0\} \times (0,1)$ corresponding to a source of species 1, $u_2 = 0$ on the right face $\partial_2 \Omega = \{1\} \times (0,1)$ corresponding to a sink of species 2, and zero-flux boundary conditions otherwise.  We also impose a zero flux boundary condition on our phase-field variable $\chi$.

%%%%%%%%%%%  Simple example
\begin{figure}
\centering     
    \includegraphics[width=0.3\textwidth]{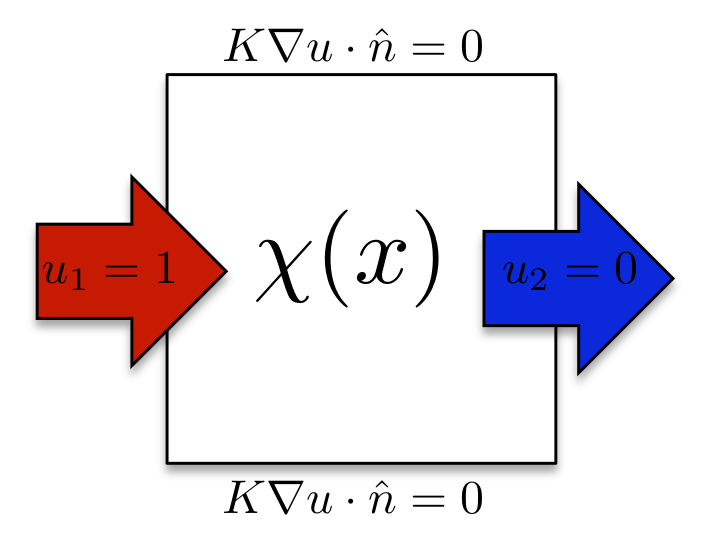}
    \includegraphics[width=0.6\textwidth]{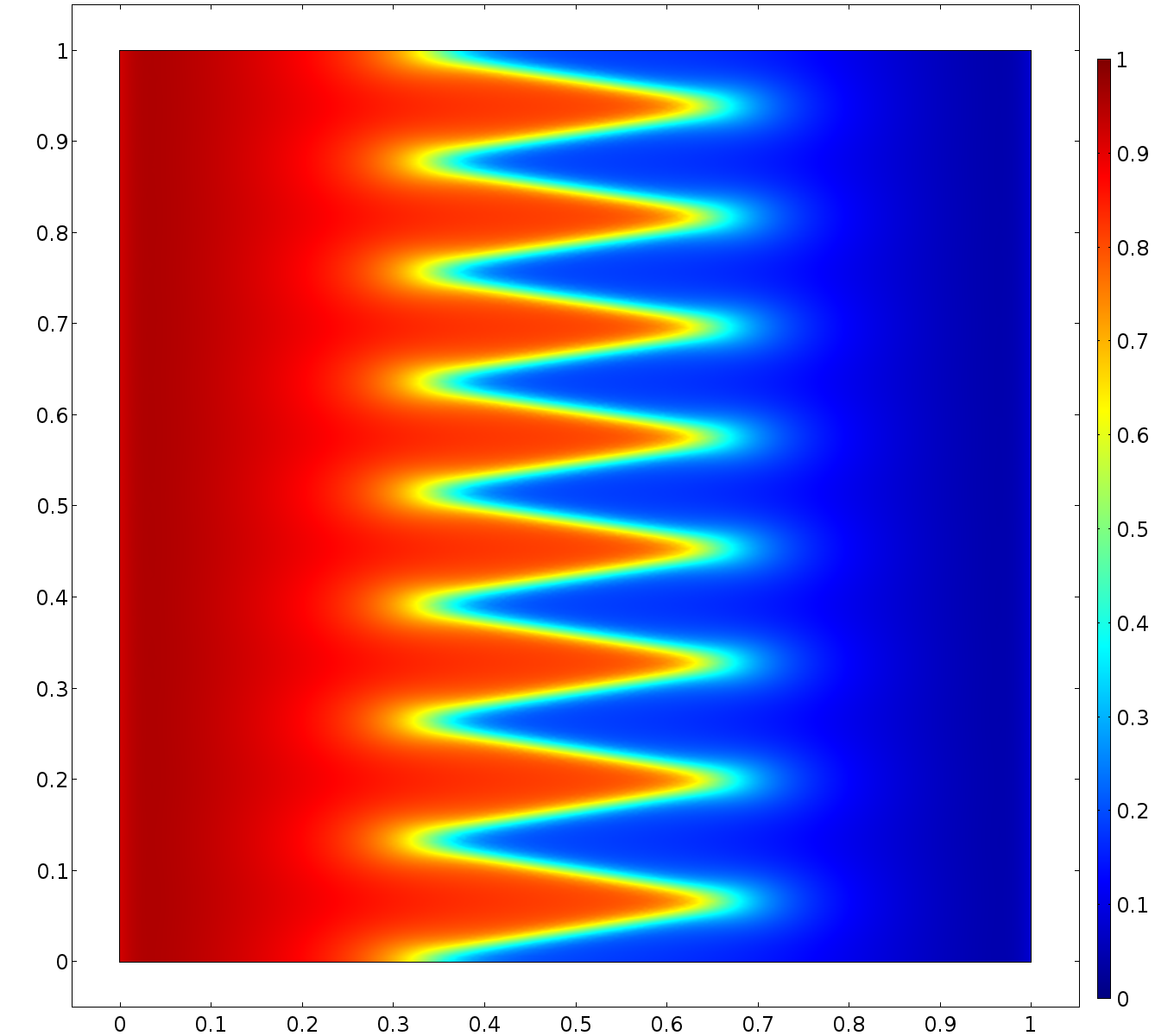} \\
    \hspace{-1in}(a)     \hspace{2.7in}(b)
    \caption{(a) Square reactor with a source of species 1 on the left and a sink of species 2 on the right. (b) Optimal design ($\chi$) for the parameters in (\ref{eq:param1})). \label{fig:square1}}
\end{figure}

The resulting optimal design $\chi$ is shown in Figure \ref{fig:square1}(b) for the parameters
\begin{gather}
k_{11} = k_{22} = 1, k_{12} = k_{21}= 1\times10^{-6}, k_s=1\times10^2, \nonumber \\
\alpha = 1,  \beta =  2\times10^{-5}, d_\chi = 2\times10^{-2}, d_u =2\times10^{-3}, \label{eq:param1} \\
v = 0.5. \nonumber
\end{gather}
This simulation had a mesh with 67068 elements, took $50$ non-dimensional units of time over 845 time steps and the $L^2$ norm of the time derivative of $\chi$ is $3.945\times10^{-4}$ at the end of the simulation. We have verified that the design does not change by refining the mesh and driving the $L^2$ norm of the time derivative of $\chi$ to $10^{-12}$.

The resulting design has a clear intuitive explanation.  Given the boundary conditions, the design seeks to draw in species 1 from the left, react it in the center to convert species 1 to species 2, and expel species 2 at the right.   Therefore, the design puts material 1, which has a high diffusivity of species 1, on the left so that it can easily transport species 1 from the source to the interface where the reaction consumes it. Material 2, which has a high diffusivity of species 2, is placed on the right so that it can easily transport species 2 from the interface, where the reaction generates it, to the sink. The design maximizes the reaction by creating a zig-zag interface between the two materials.

%%%%%%%%%%%  Volume fraction 0.5
\begin{figure}
\centering     
    \includegraphics[width=0.7\textwidth]{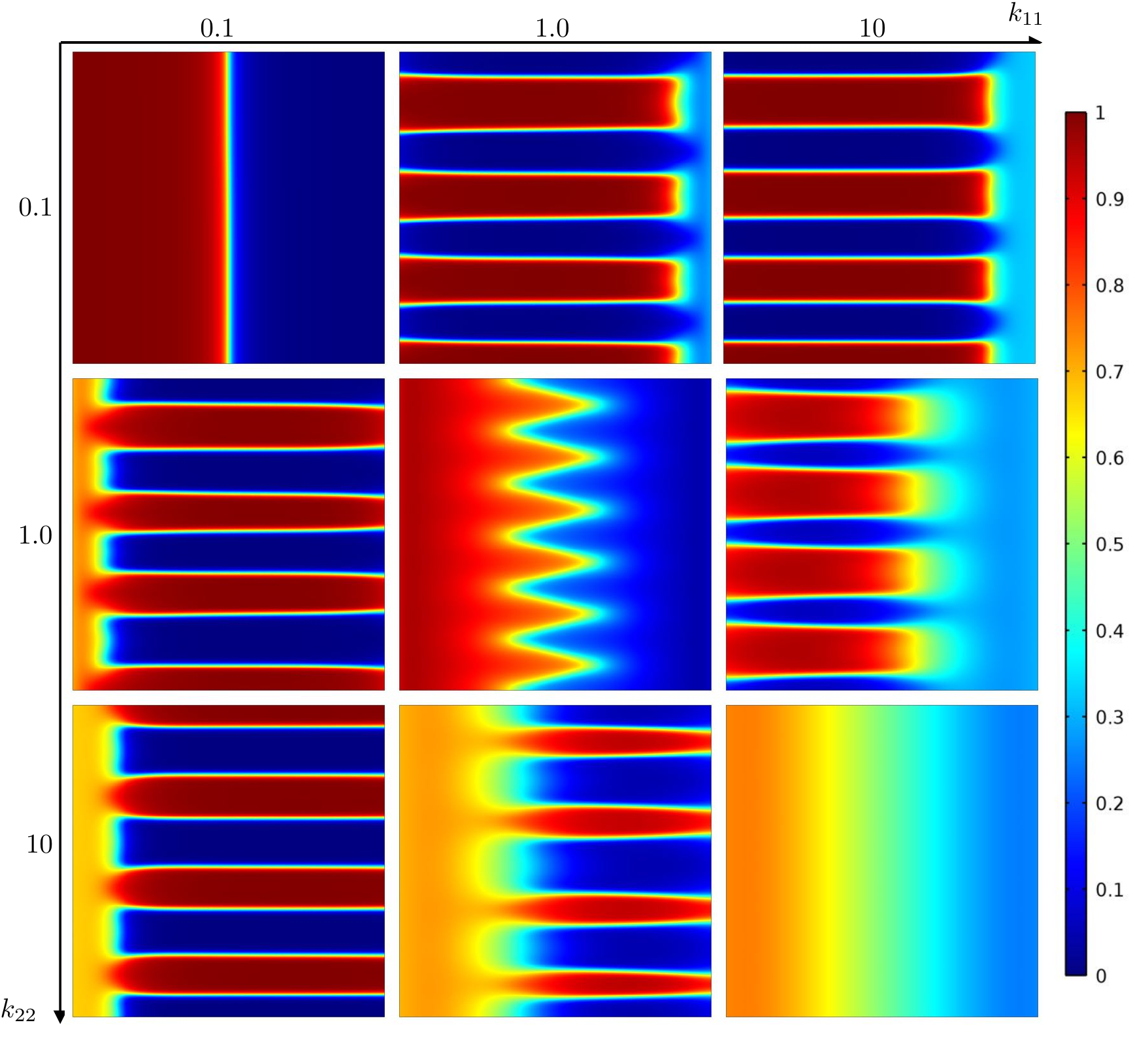}
    \caption{Designs with volume fraction $v=0.5$ as we vary diffusion coefficients with $\alpha=0.1,\,\beta=5\times10^{-5},\,k_{12}=10^{-3}\times k_{11},\,k_{21}=10^{-3}\times k_{22},\, d_\chi=1\times10^{-2}-1.5\times10^{-2},\,d_u=7\times10^{-4}-1\times10^{-3},\,k_s=1\times10^2$.  \label{fig:v05} 
}
\end{figure}

\begin{figure}
\centering     
    \includegraphics[width=0.7\textwidth]{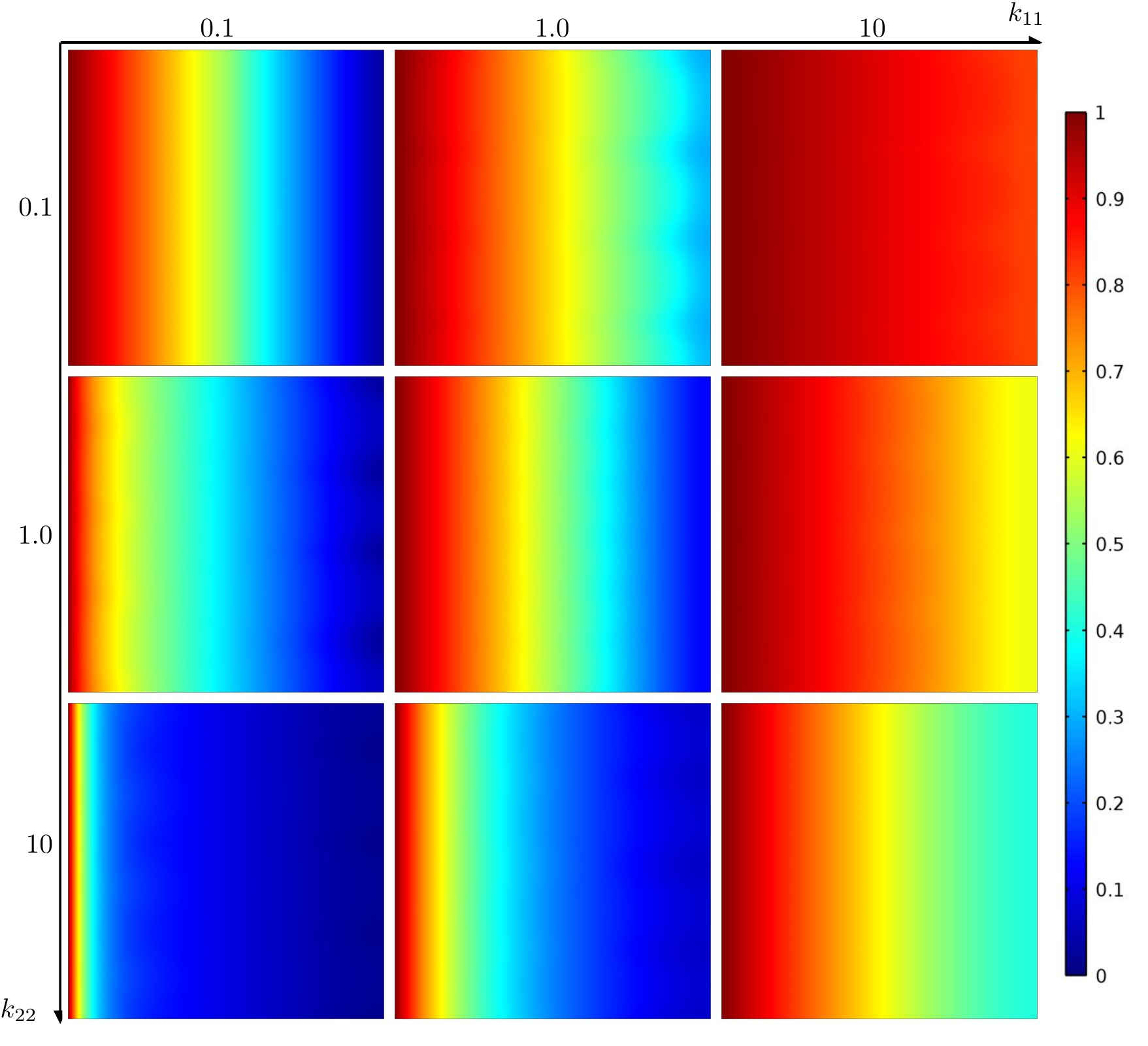}
    \caption{Concentration field $u_1$ associated with the designs presented in Figure \ref{fig:v05}.
    \label{fig:v05_u1} }
    \end{figure}

\begin{figure}
\centering     
    \includegraphics[width=0.7\textwidth]{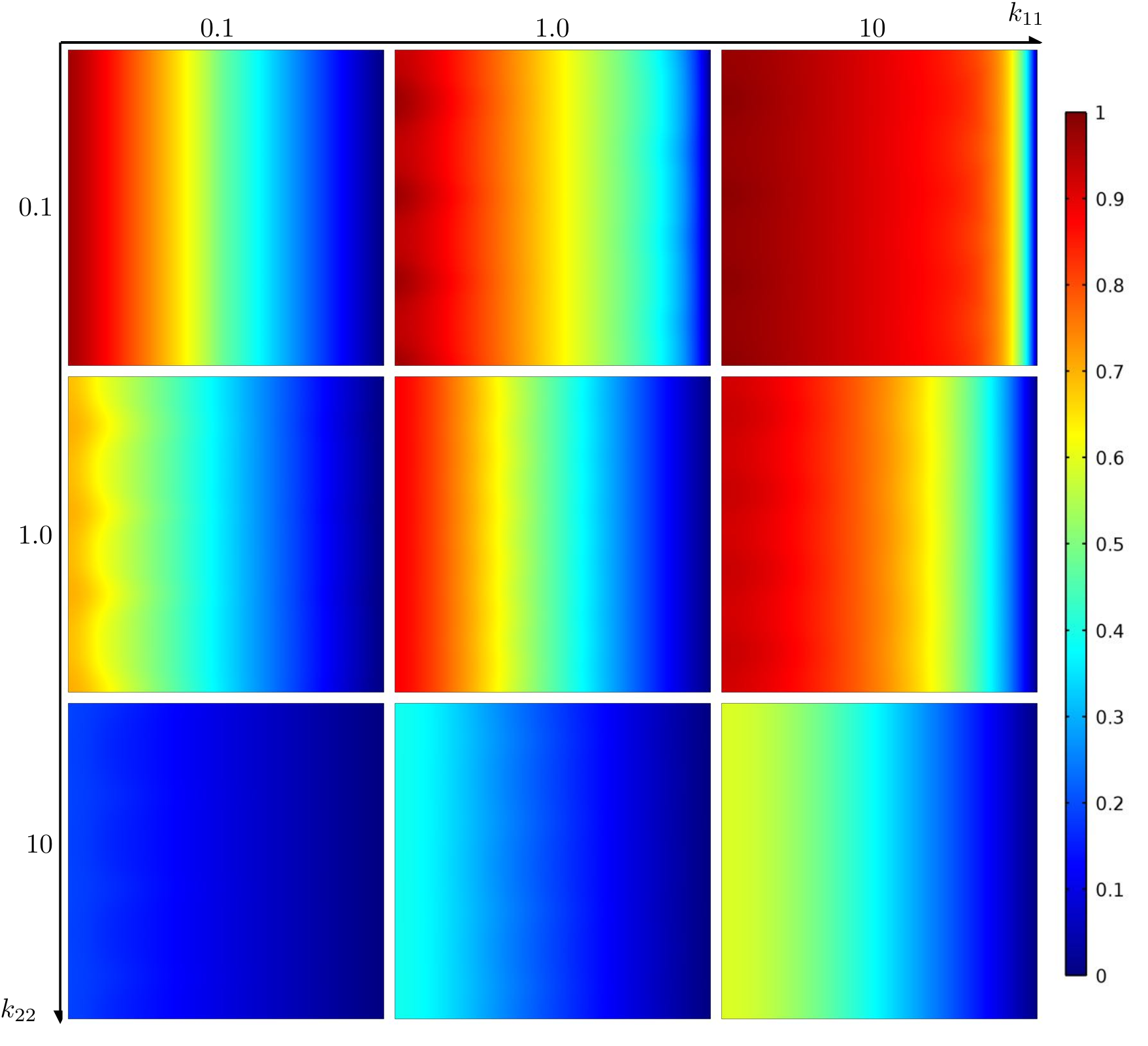}
    \caption{Concentration field $u_2$ associated with the designs presented in Figure \ref{fig:v05}.
    \label{fig:v05_u2} }
\end{figure}

\begin{figure}
\centering     
    \includegraphics[width=0.7\textwidth]{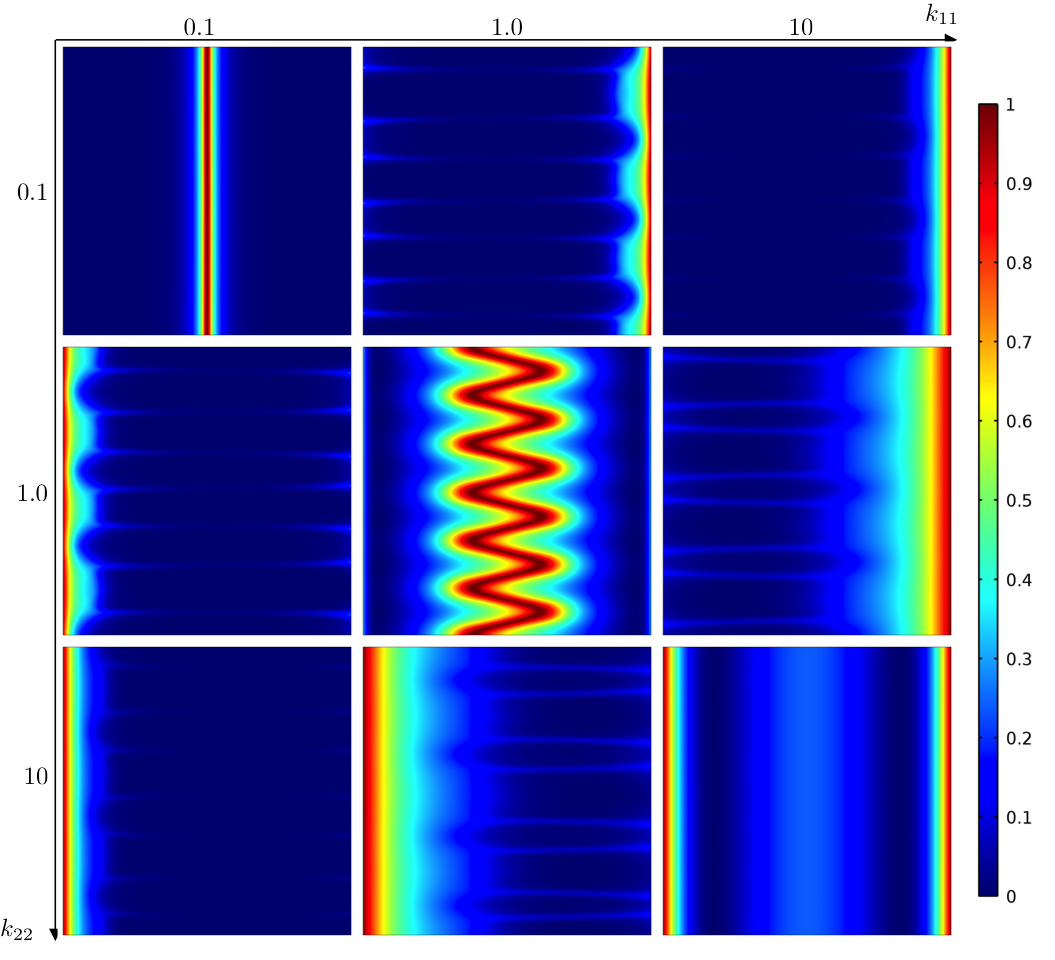}
    \caption{Distribution of reaction zones associated with the designs presented in Figure \ref{fig:v05}; normalized units.
    \label{fig:v05_rxn} 
}
\end{figure}

We now begin a parameter study for the same problem. Figure \ref{fig:v05} shows the resulting designs for a volume fraction $v = 0.5$ for various diffusivities $k_{11}, k_{22}$. Figures \ref{fig:v05_u1} and \ref{fig:v05_u2} show the corresponding concentration fields $u_1$ and $u_2$ respectively while Figure \ref{fig:v05_rxn} shows the corresponding reactions.

We begin at the center for the case $k_{11} = k_{22} = 1$, which is what we described earlier.  Decreasing both diffusivities by moving up on the diagonal to $k_{11} = k_{22}=0.1$ leads to a similar segregation of the material but the interface is sharper and straight.  On the other hand, increasing both diffusivities by moving down the diagonal to $k_{11} = k_{22}=10$ still segregates the material, but in a very diffuse manner with an almost constant gradient.  Note that the interface width  changes despite the fact that length-scale, $\sqrt{\beta/\alpha}$, predicted by the phase-field alone is held fixed.  This is because of the relative importance of the diffusion and the reaction.  When the diffusivities are both small, $k_{11} = k_{22}=0.1$ as in the upper-left, the reaction is relatively easy and diffusion difficult.  Thus one only needs a narrow region for the reaction, saving much of the pure material for optimal transport.  Conversely, when the diffusivities are both large, $k_{11} = k_{22}=10$ as in the bottom-right, the reaction is relatively difficult and diffusion easy.  Thus, one creates a very diffuse interface to optimize the reaction.

We now turn to the situation when the diffusivities are different.  Consider the case when $k_{11} = 1, k_{22} = 0.1$ as shown on the top-center.  The diffusion of species 1 is considerably easier than that of species 2.  Therefore, it is advantageous to have the reaction close to the sink.  Species 1 is transported by the long arms of material 1 (red) which protrude from the left to the right where it reacts very close to the sink, thereby reducing the distance that species 2 has to be transported.  The excess material 2 (blue) is `hidden' on the left in arms that do not participate in the transport.  The case $k_{11} = 10, k_{22} = 0.1$ shown on the top-right is similar with a slightly wider interface since reaction is more difficult compared to the transport. The case $k_{11} = 10, k_{22} = 1$ shown on the right-middle is also similar except the interfacial region is even wider.  The cases $k_{11} = 0.1, k_{22} = 1$; $k_{11} = 0.1, k_{22} = 10$ and $k_{11} = 1, k_{22} = 10$ are the analogous, with the roles of material 1 and 2 reversed.

%{\color{blue} It would be nice to plot the corresponding u1 and u2 fields?}
%The corresponding concentration fields for $u_1$ and $u_2$ are shown in \ref{fig:v05_u1} and \ref{fig:v05_u2} respectively. A local representation of the chemical reaction occurring is also presented in \ref{fig:v05_rxn}. These solutions obtained, and the values presented in \ref{tab:v05} are obtained by performing a separate, time-independent, study using the design result from the optimization procedure to eliminate any iterative errors. 

%\begin{figure}
%\centering     
%    \includegraphics[width=0.315\textwidth]{diff1_chi.png}
%    \includegraphics[width=0.30\textwidth]{diff1_u2.png}
%    \includegraphics[width=0.30\textwidth]{diff1_u2.png}
%    \includegraphics[width=0.025\textwidth]{diff1_scale.png}\\
%    (a) \hspace{1.8in} (b) \hspace{1.8in} (c)\\
%    \caption{(a) $\delta\chi=\chi_1(x,y)+\chi_2(1-x,1-y)-1$ with $\norm{\delta\chi}=0.106063$, (b) difference in $u_1$ field, $\norm{\delta u_1}=9.8890\times10^{-4}$, and (c) difference in $u_2$ field, $\norm{\delta u_2}=9.7364\times10^{-4}$. {\color{blue} What norm is this?} \label{fig:v05_diff}}
% \end{figure}

The phase-field functional, the domain, and the boundary conditions have a symmetry, and we examine if the resulting designs reflect this symmetry.  Specifically, note that if $\{u_1, u_2, \chi\}$ is a solution for a problem with $k_1,k_2$ on the square domain, then $\{1-u_2, 1-u_1, 1-\chi\}$ is a solution for a problem with $k_2,k_1$ on the square domain obtained by changing $x$ to $1-x$.  We see that our designs reflect this symmetry.  Specifically, compare the case $k_{11}=0.1,k_{22}=1$ and the resulting design $\chi_1$ shown in middle-left of Figure \ref{fig:v05} and the case $k_{11}=1,k_{22}=0.1$ and the resulting design $\chi_2$ shown in top-center of Figure \ref{fig:v05}.  We see that $\chi_1(x,y) \approx 1-\chi_2(1-x,1-y)$.  
%Indeed, Figure \ref{fig:v05_diff} shows that the difference $\delta\chi=\chi_1(x,y) - (1-\chi_2(1-x,1-y))$ in the design, and the associated concentration are extremely small.

\begin{table}
\centering
\caption{Contributions to the energy functional $L(u,\chi)$, the phase field regularization, the flux $J_i$ of each species calculated at the boundary, and the reaction (right hand side of Eq. (\ref{eq:diffuse})$_1$) integrated on the domain for $v=0.5$. \label{tab:v05}}
\vspace{2mm}
\resizebox{\columnwidth}{!}{%
\begin{tabular}{l*{8}{c}r}
\hline\hline
 $k_{11}$ & $k_{22}$ & Transport Energy & Reaction Energy & Phase Field & $J_{1,\text{in}}$ & $J_{2,\text{out}}$ & Reaction \\
  &  & $\int\frac{1}{2}\sum k_i|\nabla u_i|^2$ & $\int\frac{1}{2}\chi(1-\chi)u\cdot Au$ &  & $\int k_1\nabla u_1\cdot\hat{n}$ &$-\int k_2\nabla u_2\cdot\hat{n}$ & \\
 \hline
0.1	&	0.1	&	0.0451	&	0.0023	&	0.0203	&	0.0948	&	0.0948	&	0.0949	\\
0.1	&	1	&	0.1706	&	0.0285	&	0.0748	&	0.3866	&	0.3977	&	0.3983	\\
0.1	&	10	&	0.2952	&	0.2022	&	0.0779	&	0.9610	&	0.9947	&	0.9948	\\
1	&	0.1	&	0.1706	&	0.0285	&	0.0550	&	0.3977	&	0.3864	&	0.3983	\\
1	&	1	&	0.4276	&	0.0340	&	0.1070	&	0.9202	&	0.9201	&	0.9232	\\
1	&	10	&	0.9044	&	0.3595	&	0.1502	&	2.5015	&	2.5256	&	2.5278	\\
10	&	0.1	&	0.2953	&	0.2021	&	0.0574	&	0.9946	&	0.9602	&	0.9947	\\
10	&	1	&	0.9044	&	0.3596	&	0.1150	&	2.5257	&	2.5011	&	2.5278	\\
10	&	10	&	2.2730	&	0.9990	&	0.1699	&	6.5257	&	6.5254	&	6.5440	
\end{tabular}
}
\end{table}
Table \ref{tab:v05} shows how the different contributions to the energy change for the various cases. It also shows how the flux varies. Further, it shows the the flux at the source, sink, and reaction zone all agree.
%%%%%%%%%%%  Volume fraction 0.3
\begin{figure}
\centering     
    \includegraphics[width=0.7\textwidth]{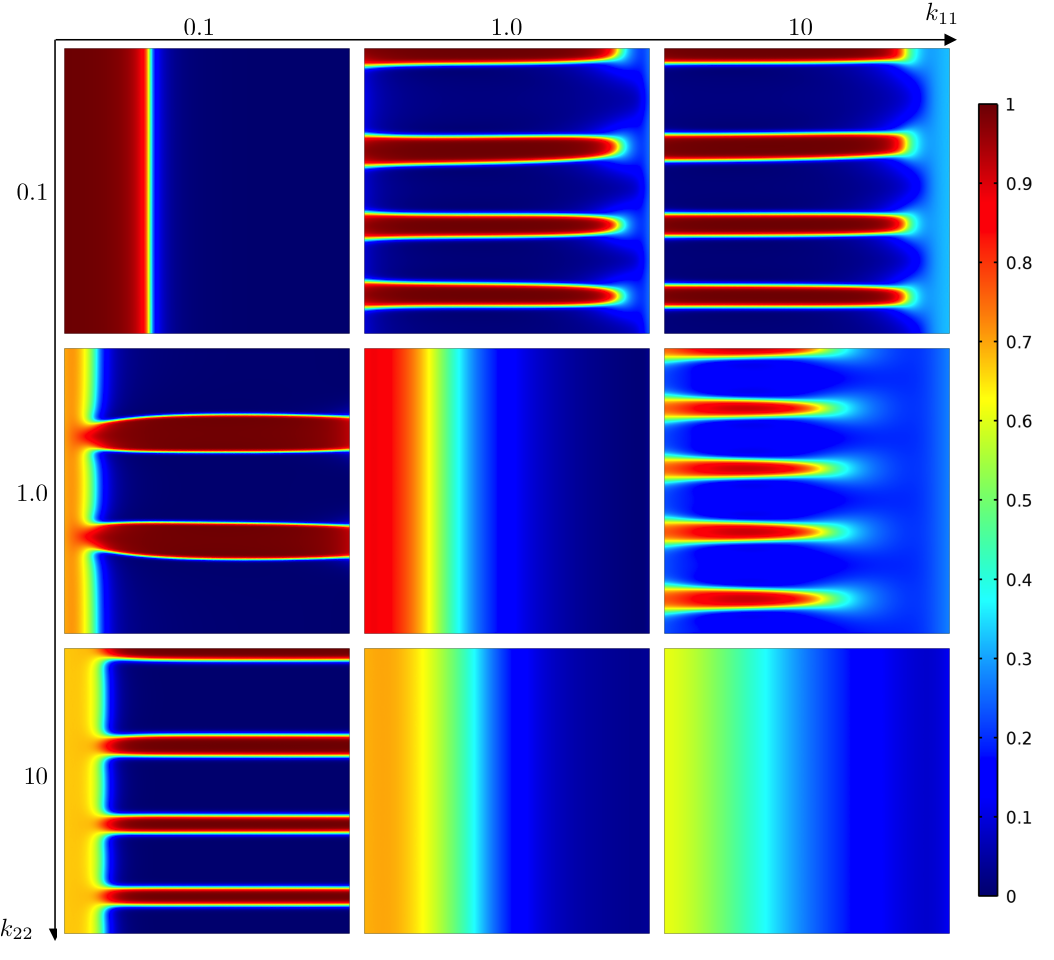}
    \caption{Parameter sweep with $v=0.3,\,\alpha=0.1,\,\beta=5\times10^{-5},\,k_{12}=10^{-3}\times k_{11},\,k_{21}=10^{-3}\times k_{22},\, d_\chi=1\times10^{-2}-2\times10^{-1},\,d_u=7\times10^{-4}-2\times10^{-2},\,k_s=1\times10^2$.\label{fig:v03}}
\end{figure}

%\begin{table}
%\centering
%\caption{{\color{blue} $v=0.3$ What does this table convey?   Split the transport energy to the transport energy and the reaction energy?  Should we report the overall flux?}}
%\begin{tabular}{l*{4}{c}r}
%\hline\hline
%$k_{11}$ & $k_{22}$ & Transport Energy & Phase Field  \\
%\hline
%0.1&	0.1&	0.04743&	0.01960\\
%0.1&	1&	0.37641	&0.04574\\
%0.1&	10	&0.52339&	0.07112	\\
%1&	0.1&	0.14867&	0.04618\\
%1&	1&	0.45681&	0.08023\\
%1&	10&	1.36629&	0.09704\\
%10&	0.1&	0.44340&	0.05073\\
%10&	1&	1.07605&	0.07702\\
%10&	10&	3.03093&	0.09210
%\end{tabular}
%\end{table}

Figure \ref{fig:v03} shows the designs for the same parameters, but for a volume fraction $v=0.3$.  The designs are similar, except the interface is more to the left.

%%%%%%%%%%%%%%%%%%%%%%%%%%%%%%%%%%%%%%
\subsection{Cylindrical reactor}
\begin{figure}
\label{fig:cyl}
\centering 
    {\small \hspace{0.2in} $\alpha=0.1,\beta=2\times 10^{-6}$ \hspace{0.4in} 
    $\alpha=1,\beta=2\times 10^{-5}$ \hspace{0.4in} 
    $\alpha=10, \beta=2\times 10^{-4}$}\\
    \rotatebox{90}{\small \phantom{abcd} $k_{11}=1,k_{22}=1$}
    \includegraphics[width=0.25\textwidth]{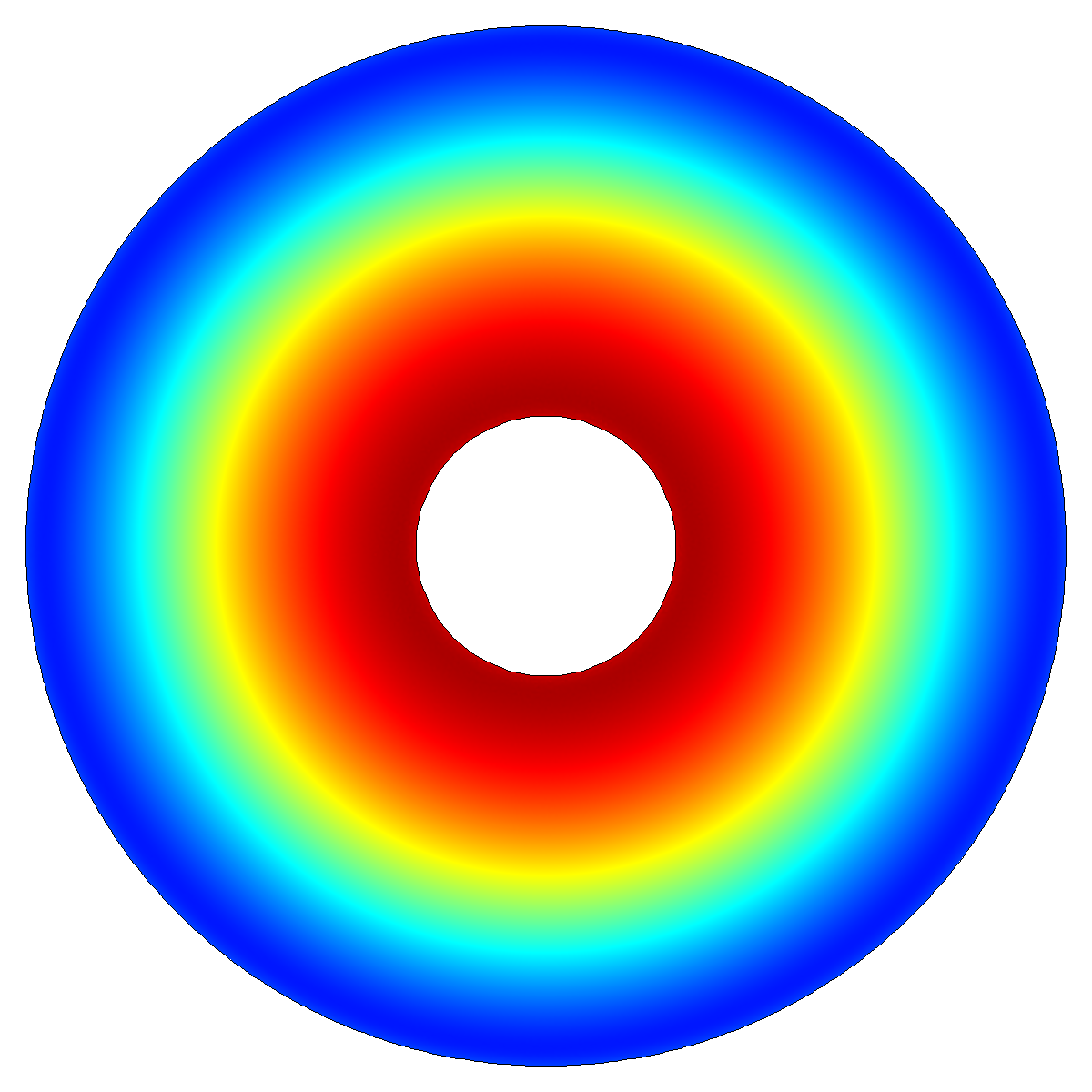}
    \includegraphics[width=0.25\textwidth]{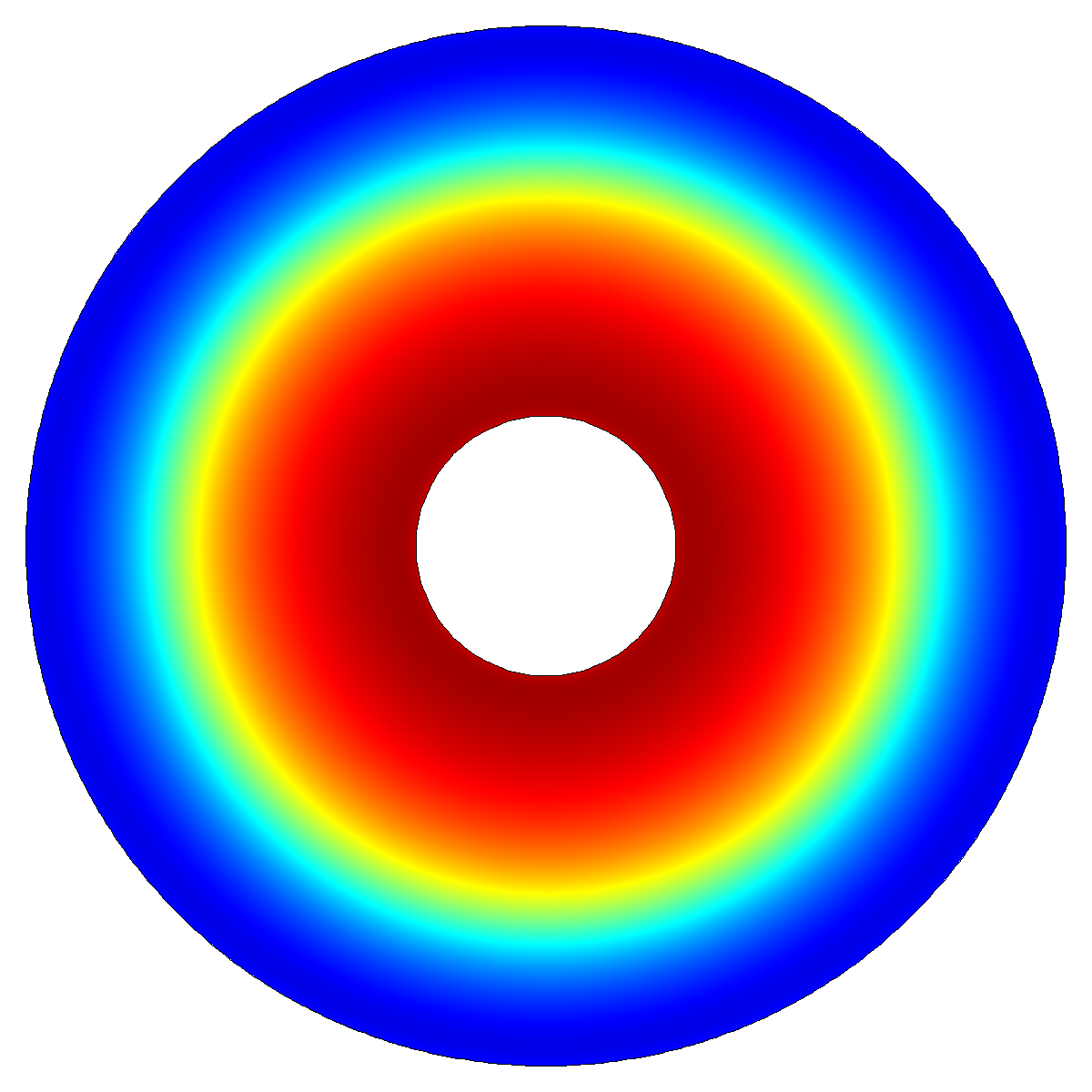}
    \includegraphics[width=0.25\textwidth]{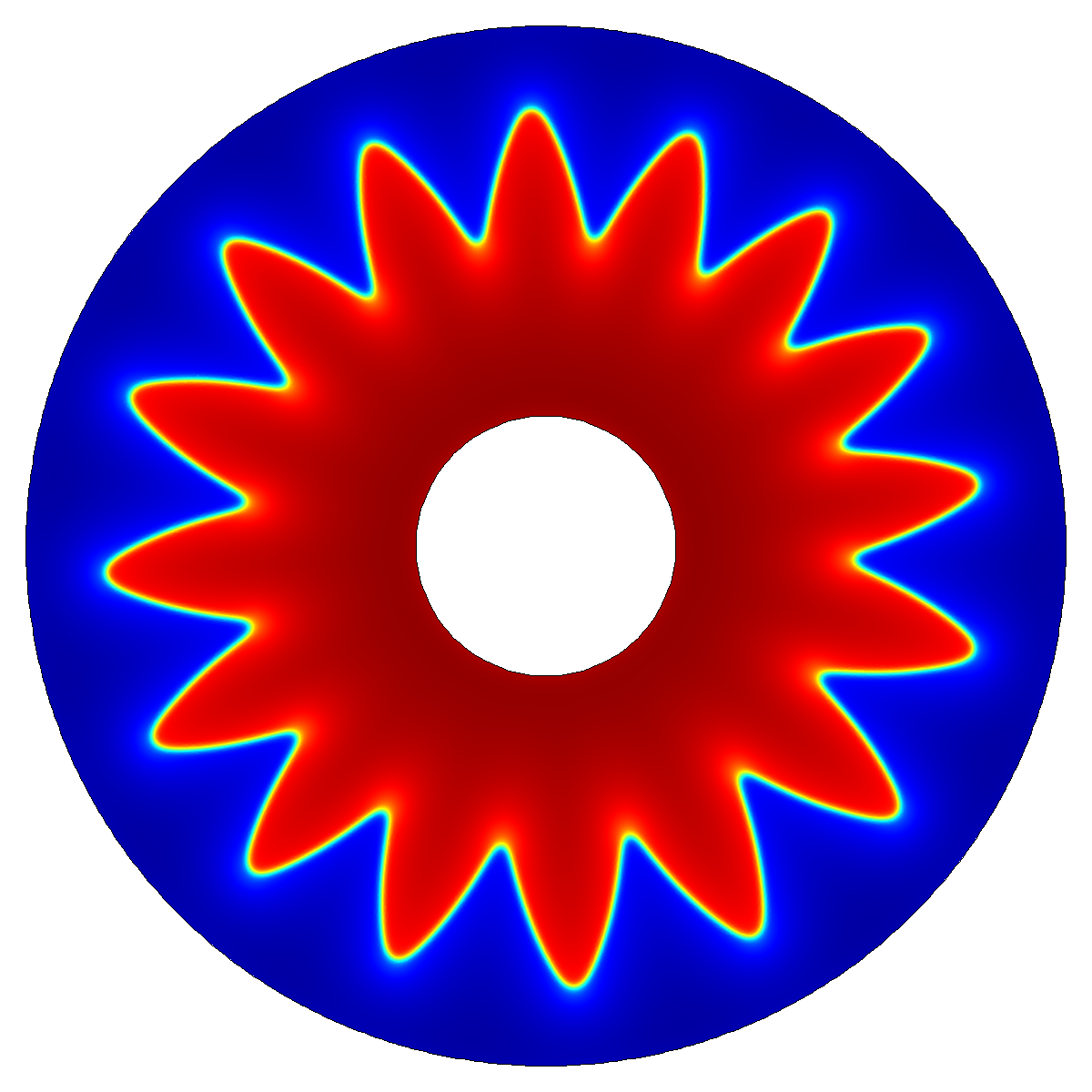}\\
        \hspace{0.2in} (a) \hspace{1.5in} (b) \hspace{1.5in} (c)\\
    \rotatebox{90}{\small \phantom{abcd} $k_{11}=0.1,k_{22}=0.1$}
    \includegraphics[width=0.25\textwidth]{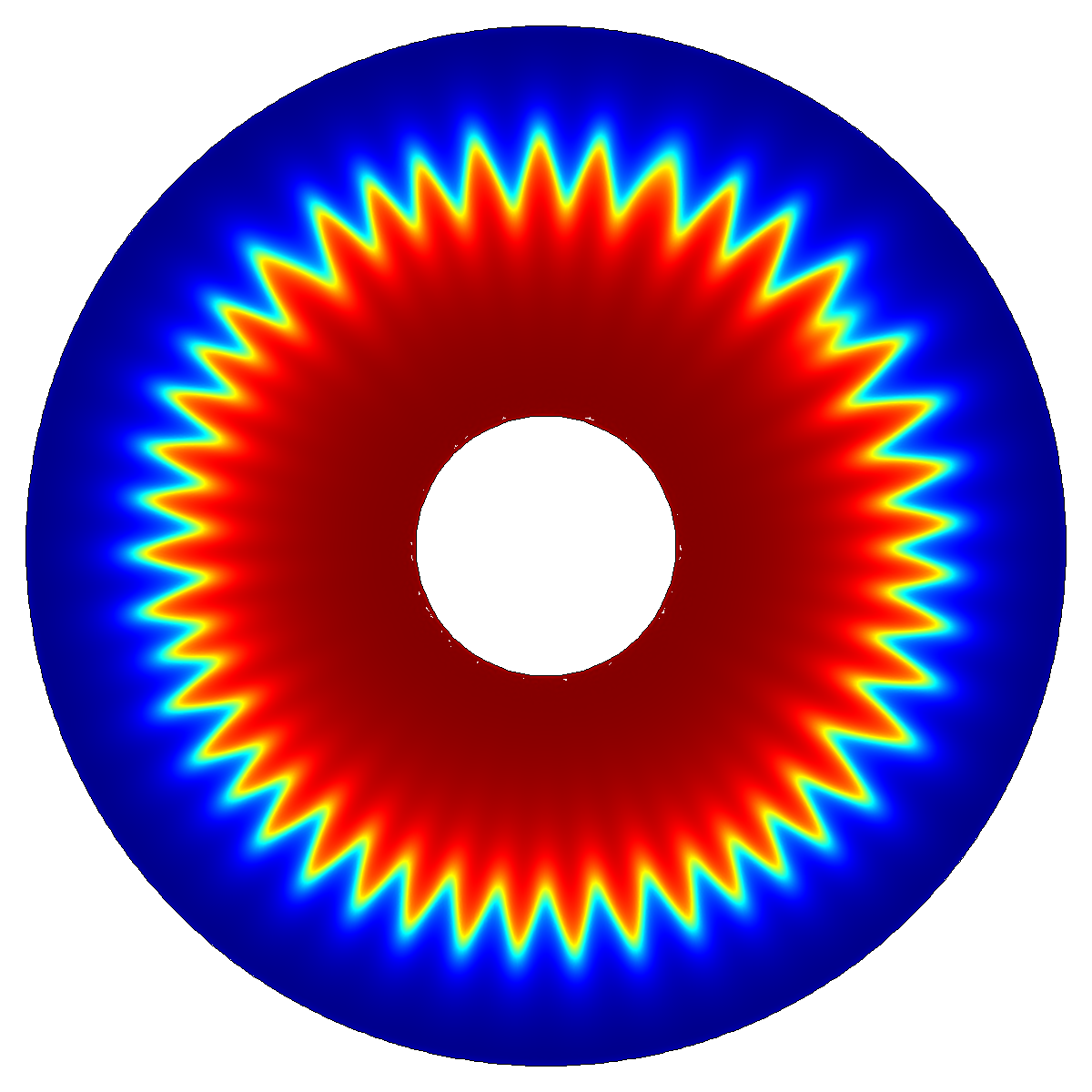}
    \includegraphics[width=0.25\textwidth]{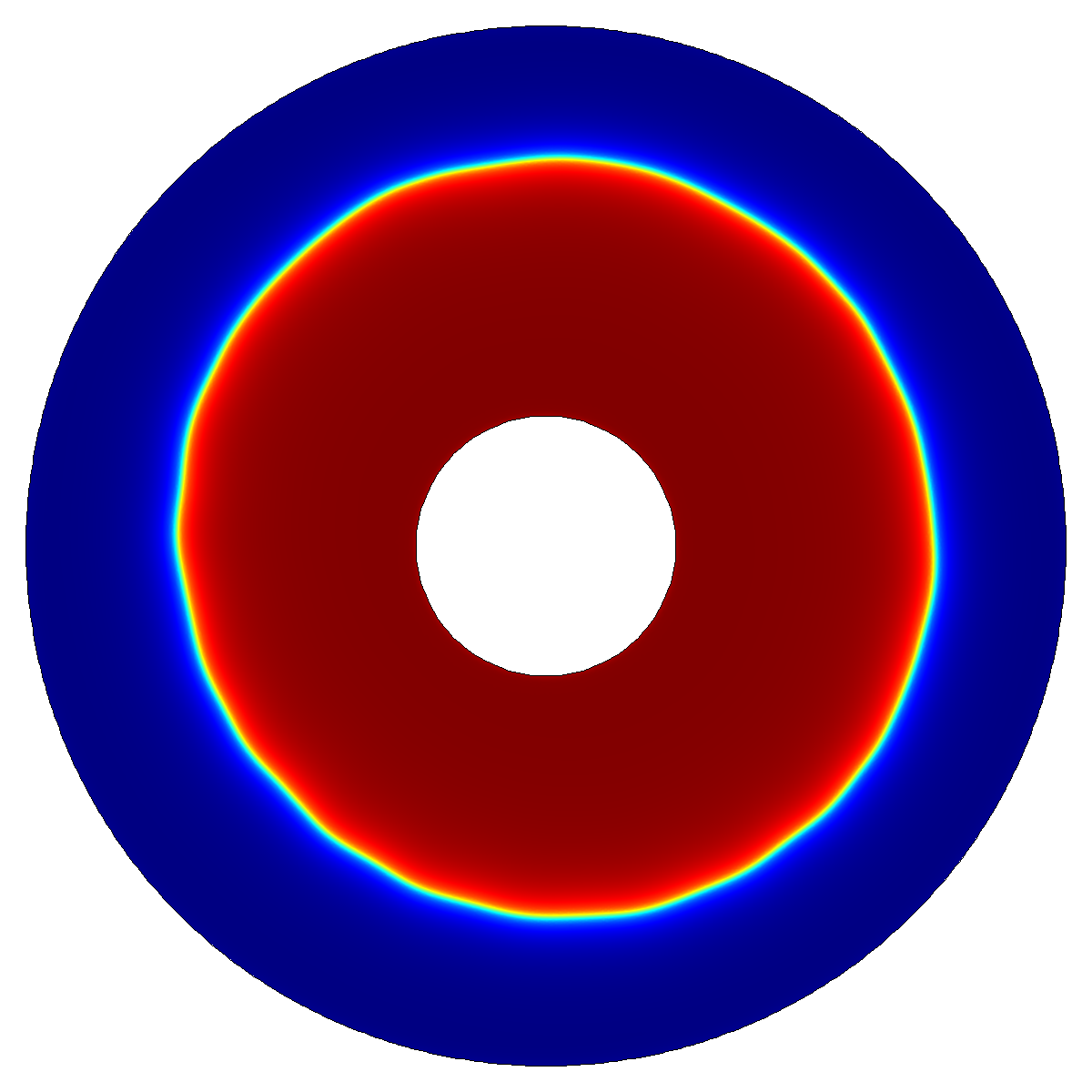}
    \includegraphics[width=0.25\textwidth]{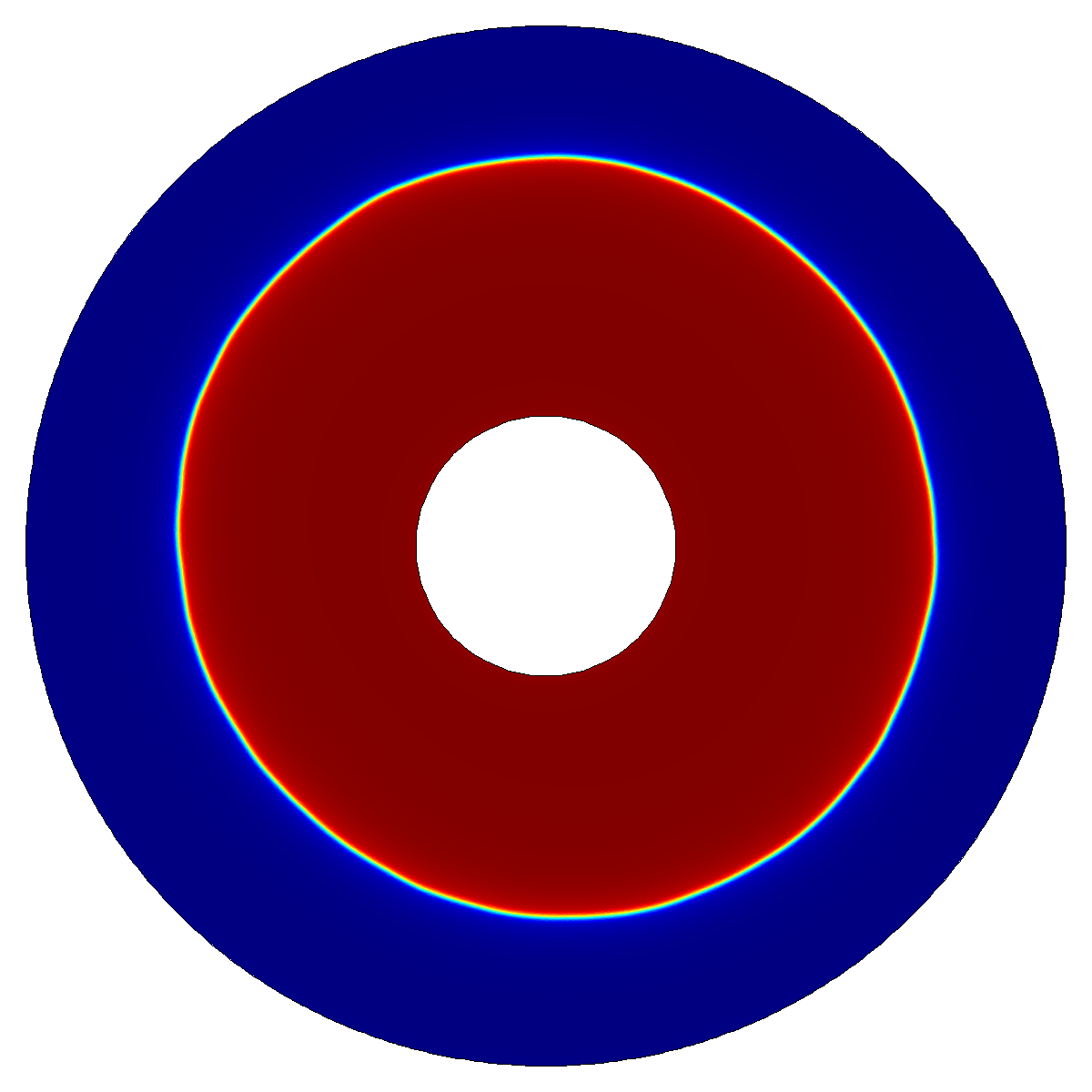}\\
        \hspace{0.2in} (d) \hspace{1.5in} (e) \hspace{1.5in} (f)\\
    \rotatebox{90}{\small \phantom{abcd} $k_{11}=1,k_{22}=0.1$}
    \includegraphics[width=0.25\textwidth]{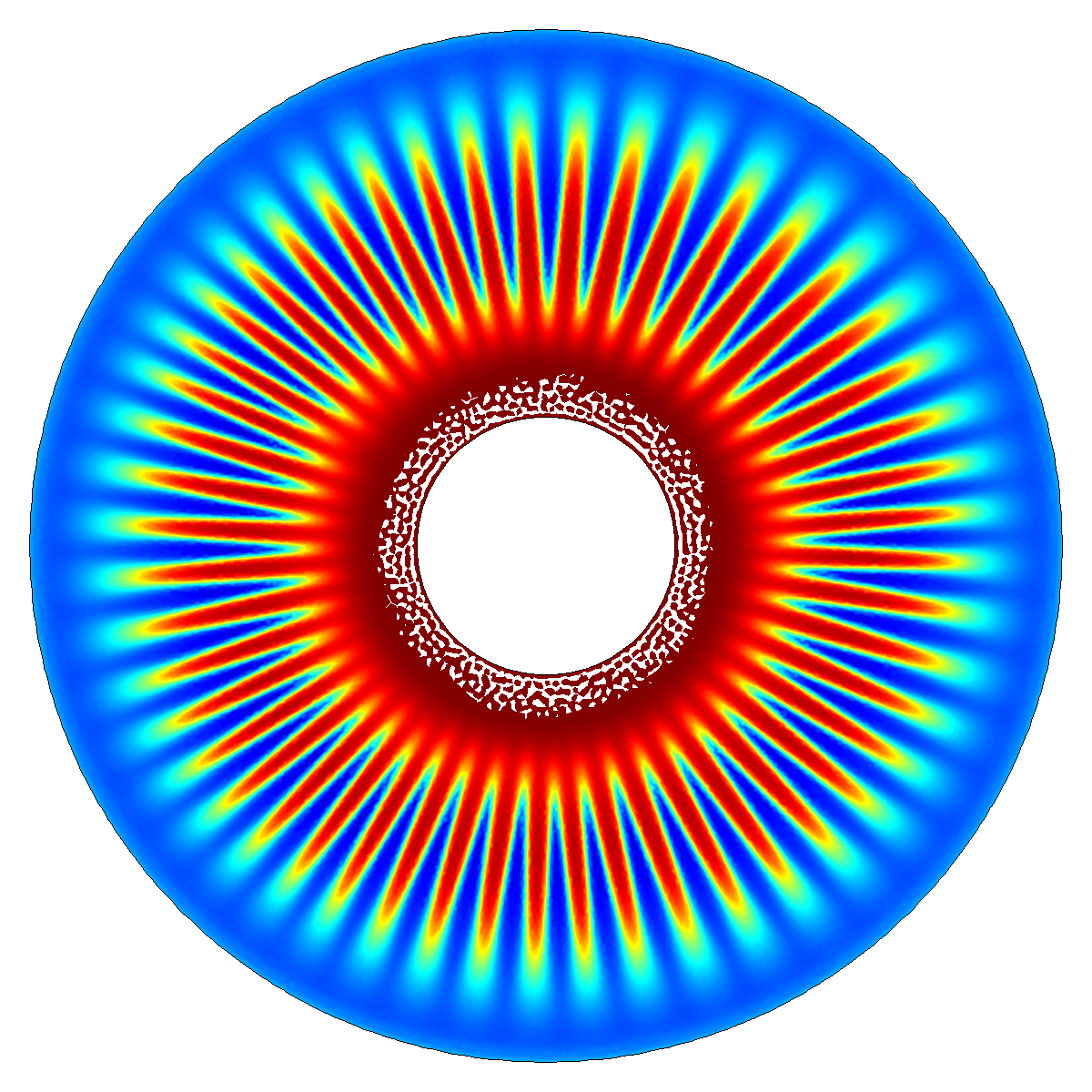}
    \includegraphics[width=0.25\textwidth]{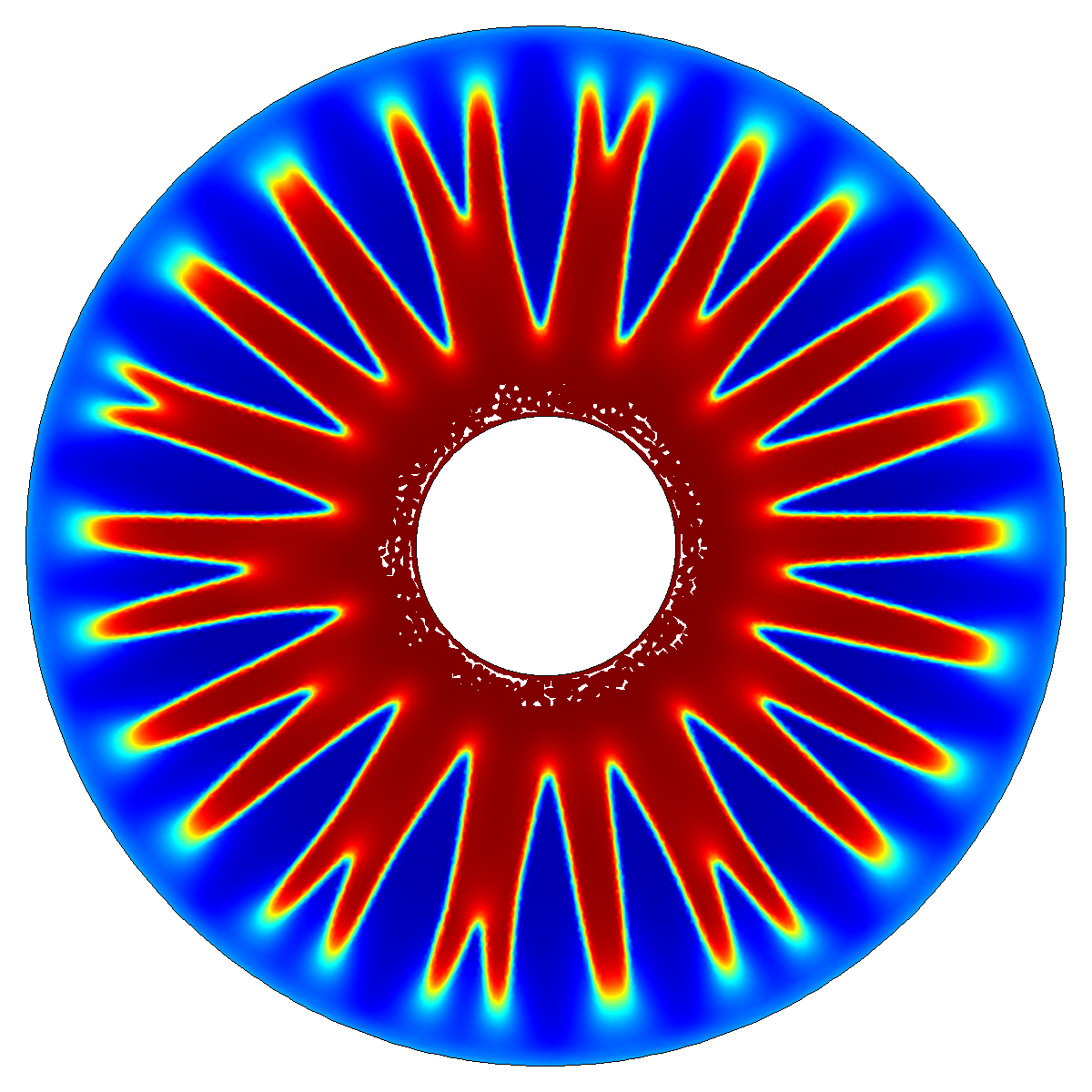}
    \includegraphics[width=0.25\textwidth]{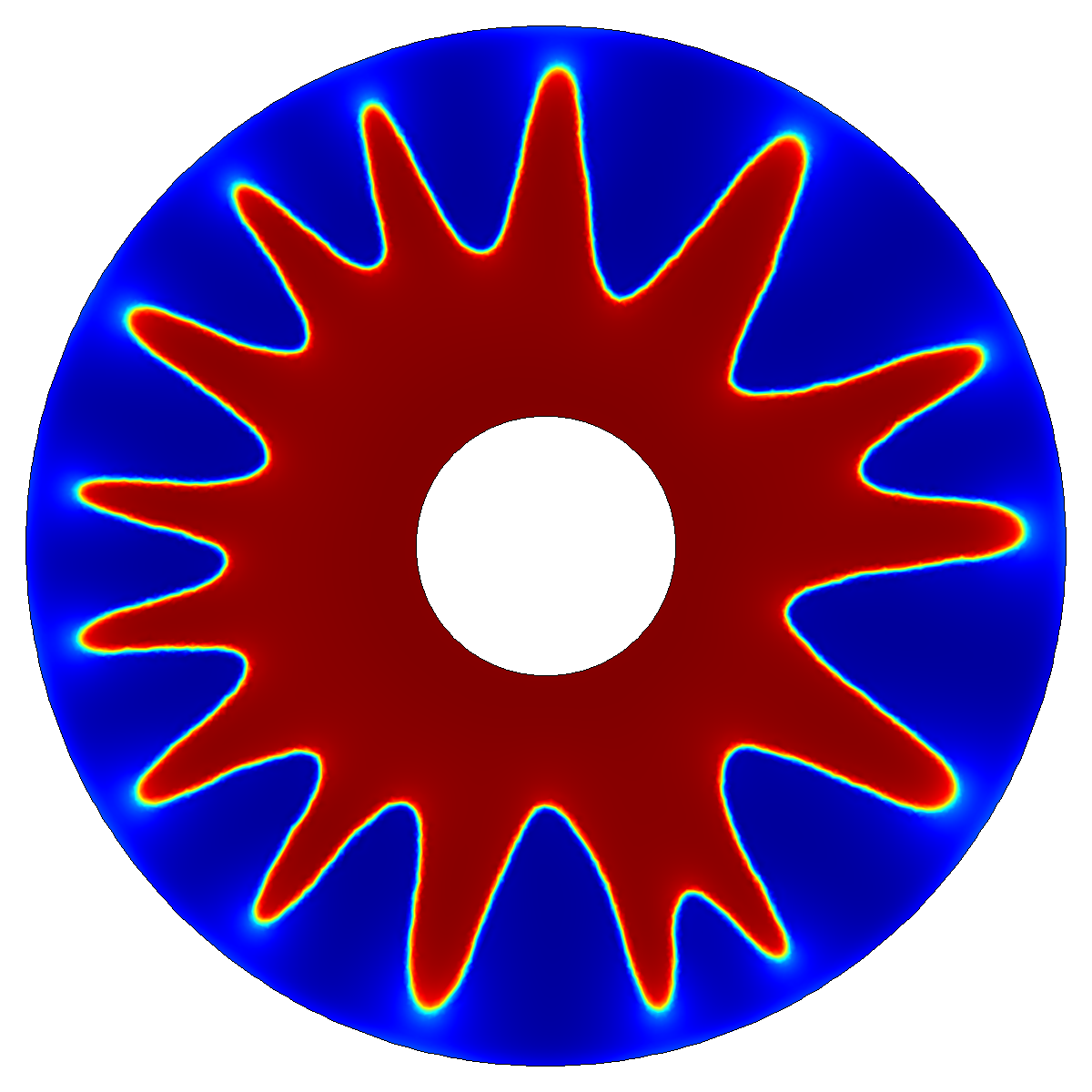}\\
       \hspace{0.2in} (g) \hspace{1.5in} (h) \hspace{1.5in} (i)\\
    \caption{Designs for a cylindrical reactor with a source of the first species at the inner edge and a sink for the second species at the outer edge.  The parameters are in (\ref{eq:param1}) except as noted in the rows and columns of the figure.  Further, $k_{12}=10^{-2} \times k_{11}, k_{21}=10^{-2}\times k_{22}$. \label{fig:cyl}}
\end{figure}

Many reactors designed for thermochemical conversion devices implement a cylindrical ceramic structure that allow for even heating and easy transport of recant gas. Thus, for the second example we look at an annular structure where the inner edge with $r=0.2$ is held as the source of the first chemical species ($\partial_1 \Omega$ where $u_1=1$) and the outer at $r=1$ is set as a sink for the second  ($\partial_2 \Omega$ where $u_2=0$).  We consider the same parameters as (\ref{eq:param1}). The resulting design is shown in Figure \ref{fig:cyl}(b). The first species enters from the inside, reacts and converts to the second species which exits from the outside. Thus, we see much of the first material on the inside and the second on the outside. Further, to enable sufficient reaction, the interface region is graded.  If we decrease the phase field coefficients by an order of magnitude, we obtain the design in Figure \ref{fig:cyl}(a) where the mixed region increases as the penalty for deviating from the pure materials is reduced. On the other hand, increasing the phase field coefficients by an order of magnitude yields the design in Figure \ref{fig:cyl}(c). Indeed, here, the penalty for deviation from the pure phases increases and therefore the interface becomes corrugated allowing sufficient reaction.

The second row of Figure \ref{fig:cyl} show the analogous result when the diffusivity is reduced by an order of magnitude. Transport is now harder compared to the reaction, and therefore nearly pure phases dominate to ensure transport and complex interfaces are avoid due to the phase field.  Again, increasing the phase field parameters promotes pure phases. The final row of Figure \ref{fig:cyl} show the results for unequal conductivity. Since the transport of first species is easier, material 1 forms long arms to transport the first species to close to the outlet where the reaction takes. Further, increasing the phase field parameters promotes pure phases and leads to fewer arms.

\subsection{Periodic cellular reactor}
\begin{figure}
\centering     
    \includegraphics[width=0.3\textwidth]{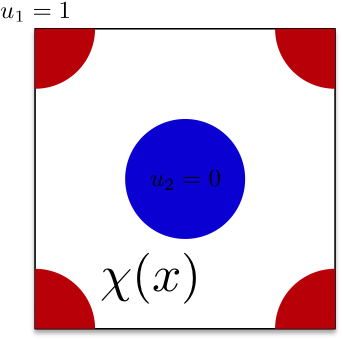} \hspace{0.25in}
    \includegraphics[width=0.3\textwidth]{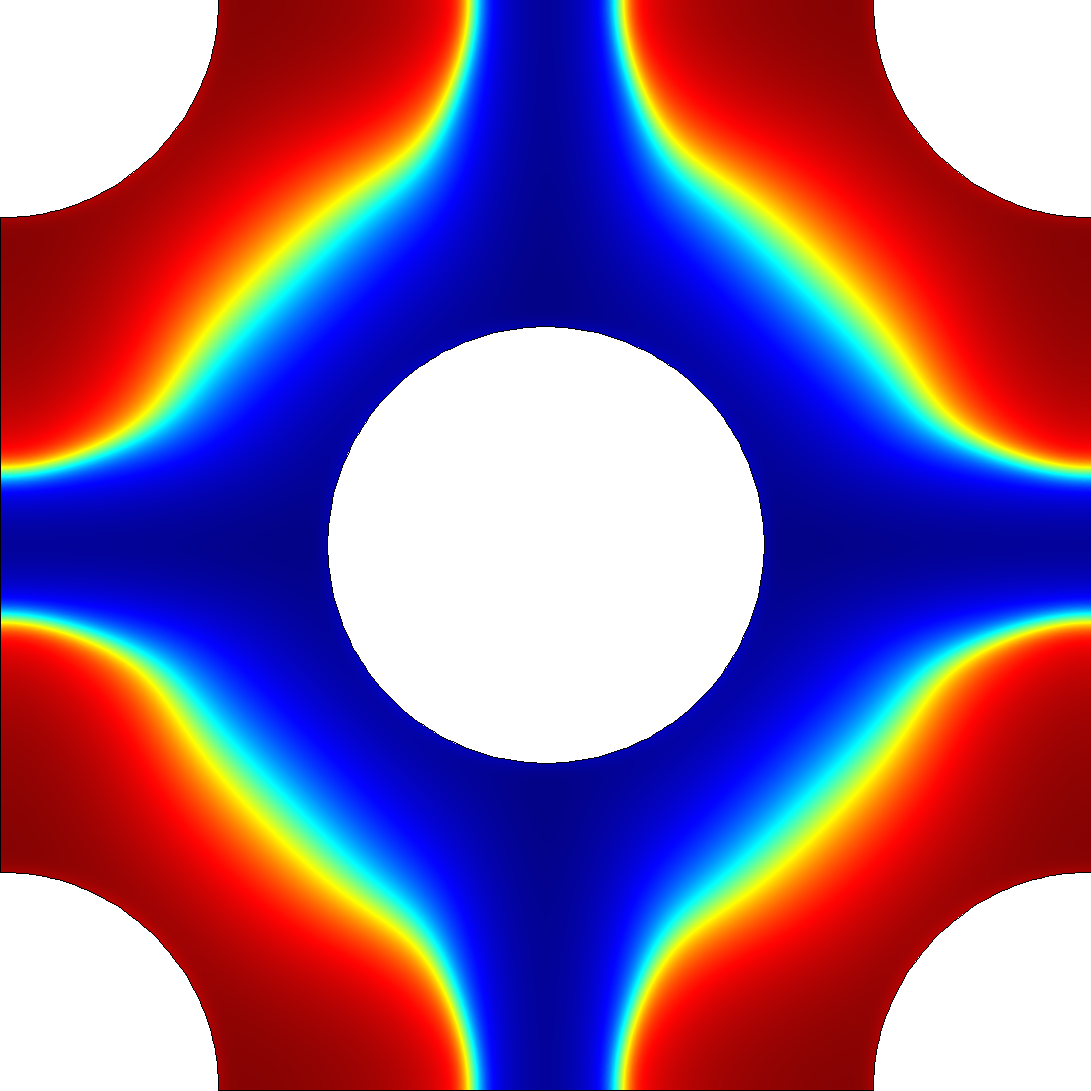}\\
    \hspace{-0.3in} (a) \hspace{1.7in} (b)\\
    \includegraphics[width=0.45\textwidth]{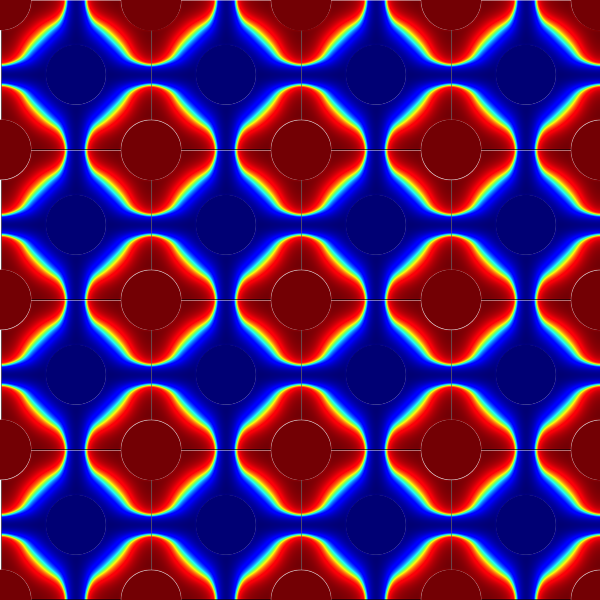}\\\
    (c)
    \caption{Periodic square distribution of circular sources and sinks. (a) Unit domain,  (b) Resulting design on the unit domain, (c) Periodic design.    \label{fig:periodic}}
\end{figure}

It is common to construct reactors as a periodic tubular array where the first species enter the reactor through one set of tubes while the second species is extracted from the reactor with a different set of tubes. Looking at a cross-section, one sees a square array of inlets and a square array of outlets. This motivates our next example where the reactor is taken to be periodic with the unit cell shown in Figure \ref{fig:periodic}(a). The source is at the corners of the cell while the outlet is at the center. We look for a periodic design to optimize the transport as before.  The resulting unit design for the parameters shown in (\ref{eq:param1}) is shown in Figure \ref{fig:periodic}(b).  It is repeated periodically in Figure \ref{fig:periodic}(c).

\section*{Acknowledgement}
It is a pleasure to acknowledge many interesting discussions with Sossina M. Haile and Robert V. Kohn. We gratefully acknowledge the financial support of the National Science Foundation through the PIRE grant  OISE-0967140.

\bibliography{mybib.bib}{}

\begin{thebibliography}{10}

\bibitem{allaire}
G.~Allaire.
\newblock {\em Shape Optimization by the Homogenization Method}.
\newblock Applied Mathematical Sciences. Springer New York, 2001.

\bibitem{allencahn}
Samuel~M Allen and John~W Cahn.
\newblock A microscopic theory for antiphase boundary motion and its
  application to antiphase domain coarsening.
\newblock {\em Acta Metallurgica}, 27(6):1085--1095, 1979.

\bibitem{Ambrosio1993}
Luigi Ambrosio and Giuseppe Buttazzo.
\newblock An optimal design problem with perimeter penalization.
\newblock {\em Calculus of Variations and Partial Differential Equations},
  1(1):55--69, 1993.

\bibitem{arico}
Antonino~Salvatore Arico, Peter Bruce, Bruno Scrosati, Jean-Marie Tarascon, and
  Walter Van~Schalkwijk.
\newblock Nanostructured materials for advanced energy conversion and storage
  devices.
\newblock {\em Nature materials}, 4(5):366--377, 2005.

\bibitem{atkinson}
A.~Atkinson, S.~Barnett, R.~J. Gorte, J.~T.~S. Irvine, A.~J. McEvoy,
  M.~Mogensen, S.~C. Singhal, and J.~Vohs.
\newblock Advanced anodes for high-temperature fuel cells.
\newblock {\em Nat Mater}, 3(1):17--27, 01 2004.

\bibitem{auriault}
Jean-Louis Auriault and Horia~I Ene.
\newblock Macroscopic modelling of heat transfer in composites with interfacial
  thermal barrier.
\newblock {\em International journal of heat and mass transfer},
  37(18):2885--2892, 1994.

\bibitem{bendsoetopology}
M.P. Bendsoe and O.~Sigmund.
\newblock {\em Topology Optimization: Theory, Methods, and Applications}.
\newblock Engineering online library. Springer Berlin Heidelberg, 2003.

\bibitem{bourdin}
Blaise Bourdin and Antonin Chambolle.
\newblock Design-dependent loads in topology optimization.
\newblock {\em ESAIM: Control, Optimisation and Calculus of Variations},
  9:19--48, 8 2003.

\bibitem{bruce}
Peter~G Bruce, Bruno Scrosati, and Jean-Marie Tarascon.
\newblock Nanomaterials for rechargeable lithium batteries.
\newblock {\em Angewandte Chemie International Edition}, 47(16):2930--2946,
  2008.

\bibitem{chen2010porous}
Y-H Chen, C-W Wang, X~Zhang, and Ann~Marie Sastry.
\newblock Porous cathode optimization for lithium cells: Ionic and electronic
  conductivity, capacity, and selection of materials.
\newblock {\em Journal of Power Sources}, 195(9):2851--2862, 2010.

\bibitem{Chueh}
William~C. Chueh, Christoph Falter, Mandy Abbott, Danien Scipio, Philipp
  Furler, Sossina~M. Haile, and Aldo Steinfeld.
\newblock High-flux solar-driven thermochemical dissociation of {CO}$_2$ and
  {H}$_2${O} using nonstoichiometric ceria.
\newblock {\em Science}, 330(6012):1797--1801, 2010.

\bibitem{Chuehb}
William~C. Chueh and Sossina~M. Haile.
\newblock A thermochemical study of ceria: exploiting an old material for new
  modes of energy conversion and {CO}$_2$ mitigation.
\newblock {\em Philosophical Transactions of the Royal Society of London A:
  Mathematical, Physical and Engineering Sciences}, 368(1923):3269--3294, 2010.

\bibitem{clemmer}
Ryan~MC Clemmer and Stephen~Francis Corbin.
\newblock Influence of porous composite microstructure on the processing and
  properties of solid oxide fuel cell anodes.
\newblock {\em Solid State Ionics}, 166(3):251--259, 2004.

\bibitem{comsol}
Comsol.
\newblock {\em Multiphysics Reference Guide for COMSOL 5.1}, 2015.

\bibitem{cronin}
J~Scott Cronin, James~R Wilson, and Scott~A Barnett.
\newblock Impact of pore microstructure evolution on polarization resistance of
  {N}i-{Y}ttria-stabilized zirconia fuel cell anodes.
\newblock {\em Journal of Power Sources}, 196(5):2640--2643, 2011.

\bibitem{dacorognadirect}
B.~Dacorogna.
\newblock {\em Direct Methods in the Calculus of Variations}.
\newblock Applied Mathematical Sciences. Springer New York, 2007.

\bibitem{et}
I.~Ekeland and R.~T\'{e}mam.
\newblock {\em Convex Analysis and Variational Problems}.
\newblock Society for Industrial and Applied Mathematics, 1999.

\bibitem{evans}
L.C. Evans.
\newblock {\em Partial Differential Equations}.
\newblock Graduate studies in mathematics. American Mathematical Society, 2010.

\bibitem{Goodman}
J~Goodman, R~V Kohn, and L~Reyna.
\newblock Numerical study of a relaxed variational problem from optimal design.
\newblock {\em Comput. Methods Appl. Mech. Eng.}, 57(1):107--127, August 1986.

\bibitem{gopal}
Chirranjeevi~Balaji Gopal and Sossina~M. Haile.
\newblock An electrical conductivity relaxation study of oxygen transport in
  samarium doped ceria.
\newblock {\em J. Mater. Chem. A}, 2:2405--2417, 2014.

\bibitem{hu}
Y-S Hu, Philipp Adelhelm, Bernd~M Smarsly, Sarmimala Hore, Markus Antonietti,
  and Joachim Maier.
\newblock Synthesis of hierarchically porous carbon monoliths with highly
  ordered microstructure and their application in rechargeable lithium
  batteries with high-rate capability.
\newblock {\em Advanced Functional Materials}, 17(12):1873--1878, 2007.

\bibitem{jia}
Haiping Jia, Pengfei Gao, Jun Yang, Jiulin Wang, Yanna Nuli, and Zhi Yang.
\newblock Novel three-dimensional mesoporous silicon for high power lithium-ion
  battery anode material.
\newblock {\em Advanced Energy Materials}, 1(6):1036--1039, 2011.

\bibitem{jung}
WooChul Jung, Julien~O. Dereux, William~C. Chueh, Yong Hao, and Sossina~M.
  Haile.
\newblock High electrode activity of nanostructured{,} columnar ceria films for
  solid oxide fuel cells.
\newblock {\em Energy Environ. Sci.}, 5:8682--8689, 2012.

\bibitem{kohnstrang1}
Robert~V. Kohn and Gilbert Strang.
\newblock Optimal design and relaxation of variational problems, i.
\newblock {\em Communications on Pure and Applied Mathematics}, 39(1):113--137,
  1986.

\bibitem{lai}
Wei Lai and Sossina~M Haile.
\newblock Impedance spectroscopy as a tool for chemical and electrochemical
  analysis of mixed conductors: a case study of ceria.
\newblock {\em Journal of the American Ceramic Society}, 88(11):2979--2997,
  2005.

\bibitem{li}
Yu~Li, Zheng-Yi Fu, and Bao-Lian Su.
\newblock Hierarchically structured porous materials for energy conversion and
  storage.
\newblock {\em Advanced Functional Materials}, 22(22):4634--4667, 2012.

\bibitem{peter2008different}
Malte~A Peter and Michael B{\"o}hm.
\newblock Different choices of scaling in homogenization of diffusion and
  interfacial exchange in a porous medium.
\newblock {\em Mathematical Methods in the Applied Sciences},
  31(11):1257--1282, 2008.

\bibitem{smith}
JR~Smith, A~Chen, D~Gostovic, D~Hickey, D~Kundinger, KL~Duncan, RT~DeHoff,
  KS~Jones, and ED~Wachsman.
\newblock Evaluation of the relationship between cathode microstructure and
  electrochemical behavior for {SOFC}s.
\newblock {\em Solid State Ionics}, 180(1):90--98, 2009.

\bibitem{stephenson}
David~E Stephenson, Bryce~C Walker, Cole~B Skelton, Edward~P Gorzkowski,
  David~J Rowenhorst, and Dean~R Wheeler.
\newblock Modeling 3{D} microstructure and ion transport in porous {L}i-ion
  battery electrodes.
\newblock {\em Journal of The Electrochemical Society}, 158(7):A781--A789,
  2011.

\bibitem{suzuki}
Toshio Suzuki, Zahir Hasan, Yoshihiro Funahashi, Toshiaki Yamaguchi, Yoshinobu
  Fujishiro, and Masanobu Awano.
\newblock Impact of anode microstructure on solid oxide fuel cells.
\newblock {\em Science}, 325(5942):852--855, 2009.

\bibitem{umeda}
Grant~A. Umeda, William~C. Chueh, Liam Noailles, Sossina~M. Haile, and Bruce~S.
  Dunn.
\newblock Inverse opal ceria-zirconia: architectural engineering for
  heterogeneous catalysis.
\newblock {\em Energy Environ. Sci.}, 1:484--486, 2008.

\bibitem{venstrom}
Luke~J. Venstrom, Nicholas Petkovich, Stephen Rudisill, Andreas Stein, and
  Jane~H. Davidson.
\newblock The effects of morphology on the oxidation of ceria by water and
  carbon dioxide.
\newblock {\em Journal of Solar Energy Engineering}, 134(1):011005--011005, 11
  2011.

\bibitem{wen}
Zhenhai Wen, Qiang Wang, Qian Zhang, and Jinghong Li.
\newblock In situ growth of mesoporous {S}n{O}$_2$ on multiwalled carbon
  nanotubes: A novel composite with porous-tube structure as anode for lithium
  batteries.
\newblock {\em Advanced Functional Materials}, 17(15):2772--2778, 2007.

\bibitem{wilson}
James~R. Wilson, J.~Scott Cronin, Anh~T. Duong, Sherri Rukes, Hsun-Yi Chen,
  Katsuyo Thornton, Daniel~R. Mumm, and Scott Barnett.
\newblock Effect of composition of
  ({L}a$_{0.8}${S}r$_{0.2}${M}n{O}$_3$$-${€"Y}$_2${O}$_3-$stabilized
  {Z}r{O}$_2$) cathodes: Correlating three-dimensional microstructure and
  polarization resistance.
\newblock {\em Journal of Power Sources}, 195(7):1829 -- 1840, 2010.

\bibitem{barnett}
James~R. Wilson, Worawarit Kobsiriphat, Roberto Mendoza, Hsun-Yi Chen, Jon~M.
  Hiller, Dean~J. Miller, Katsuyo Thornton, Peter~W. Voorhees, Stuart~B. Adler,
  and Scott~A. Barnett.
\newblock Three-dimensional reconstruction of a solid-oxide fuel-cell anode.
\newblock {\em Nat Mater}, 5(7):541--544, 07 2006.

\end{thebibliography}
\bibliographystyle{plain}

\end{document}